\documentclass[a4paper,12pt]{elsarticle}

\usepackage{amsthm,amsmath,amssymb,url,color,array,stmaryrd}

\addtolength{\textwidth}{32mm}
\addtolength{\oddsidemargin}{-16mm}
\addtolength{\textheight}{30mm}
\addtolength{\topmargin}{-18mm}

\newtheorem{thm}{Theorem}[section]
\newtheorem{lem}[thm]{Lemma}
\newtheorem{cor}[thm]{Corollary}

\theoremstyle{definition}

\newtheorem{pr}[thm]{Problem}

\theoremstyle{remark}
\newtheorem{rem}[thm]{Remark}

\numberwithin{equation}{section}

\def\printdate{\hfill\small\it\today}
\makeatletter
\def\ps@pprintTitle{%
  \let\@oddhead\@empty
  \let\@evenhead\@empty
  \let\@oddfoot\printdate
  \let\@evenfoot\@oddfoot
}
\makeatother

\def\st{:\,}

\def\le{\leqslant}
\def\ge{\geqslant}

\def\2{^{2}}
\def\p{^{p}}

\begin{document}

\begin{frontmatter}

\title{Totally Silver Graphs}


\author[mg]{M. Ghebleh}
\address[mg]{Department of Mathematics,
             Faculty of Science,
             Kuwait University,
             State of Kuwait}
\ead[mg]{mamad@sci.kuniv.edu.kw}

\author[ebad]{E. S. Mahmoodian}
\address[ebad]{Department of Mathematical Sciences,
             Sharif University of Technology,
             Tehran, Iran}
\ead[ebad]{emahmood@sharif.ir}

\begin{abstract}
A {\em totally silver coloring} of a graph $G$ is a $k$--coloring of $G$ such that
for every vertex $v\in V(G)$, each color appears exactly once on $N[v]$, the
closed neighborhood of~$v$. A {\em totally silver graph} is a graph which admits
a totally silver coloring. 
Totally silver coloring are directly related to other areas of graph theory such as distance coloring and domination.
In this work, we present several constructive characterizations of
totally silver graphs and bipartite totally silver graphs.
We give several infinite families of totally silver graphs.
We also give cubic totally silver graphs of girth up to~$10$.
\end{abstract}
\begin{keyword}
Graph coloring\sep distance coloring\sep silver coloring\sep domination.


\end{keyword}

\end{frontmatter}


\section{Introduction}
\label{sec:intro}


All graphs in this paper are finite and simple. 
A {\em totally silver coloring} of a graph $G$ is a $k$--coloring of $G$ such that
for every vertex $v\in V(G)$, each color appears exactly once on $N[v]$, the
closed neighborhood of~$v$. A {\em totally silver graph} is a graph which admits
a totally silver coloring. It is immediate from this definition that every graph
admitting a totally silver coloring with $k$ colors is $(k-1)$--regular.
Totally silver colorings are introduced in~\cite{silvercubes}. The term totally
silver is inspired by an International Mathematical Olympiad problem on
silver matrices~\cite{IMO97}.

Totally silver colorings appear, under the name {\em perfect colorings}, in the study of file segmentations in a network in~\cite{perfectcol}.
Totally silver colorings are also studied in~\cite{strongcol} where they are called {\em strong colorings}.
Totally silver coloring are directly translated to problems in other
areas of graph theory. For instance, it is proved in~\cite{silvercubes} that a
$\Delta$--regular graph is totally silver, if an only if it is domatically
full. That is, if and only its vertices can be partitioned into $\Delta+1$
dominating sets.

Another area of graph theory related to totally silver colorings is  distance coloring.
The $p$th power of a graph $G$, denoted by $G\p$, is the graph with vertex set $V(G)$
in which two distinct vertices are adjacent, if and only if they are at distance at most~$p$
in~$G$. A $\Delta$--regular graph $G$ is totally silver, if and only if $G\2$ is
$\Delta+1$--colorable.
Colorings of
$G\p$ are studied in the literature as {\em distance--$p$ colorings}.
In particular, upper bounds on the number of colors required by a 
distance--$p$ coloring have been studied. If $G$ is a graph with maximum degree $\Delta$, then $G\2$ is $\Delta^2+1$--colorable by Brooks' theorem.
On the other hand, the closed neighborhood of a vertex of degree $\Delta$
induces a $\Delta+1$--clique in~$G\2$. So we have
\begin{equation}\Delta+1\le\chi(G\2)\le\Delta^2+1.\label{eq:bound}\end{equation}
By Brook's theorem, the upper bound above is tight only when $G\2$ is a
$\Delta^2+1$--clique. It is easily seen that this happens precisely when $G$ is a Moore graph with diameter~$2$. Thus for every graph other than the $5$--cycle,
the Petersen graph, the Hoffman--Singleton graph, and the Moore graph with degree
$57$ and diameter $2$ (if it exists), the above upper bound is improved to~$\Delta^2$.
For graphs with maximum degree $3$, this bound is further improved in the following theorem of~\cite{cranston}.

\begin{thm}{\em\cite{cranston}}
Let $G$ be a subcubic graph other than the Petersen graph. Then $G\2$ is
$8$--choosable, that is for every assignment of lists of size at least $8$
to the vertices of $G$, there is a proper coloring of $G\2$ such that
the color of each vertex is chosen from its list.
\end{thm}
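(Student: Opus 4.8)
The plan is to argue by contradiction through a minimal counterexample, combining reducibility with the list version of Brooks' theorem. Suppose the statement fails, and among all subcubic graphs other than the Petersen graph whose square is not $8$--choosable choose one, $G$, with $|V(G)|$ as small as possible; fix a list assignment $L$ with $|L(v)|=8$ for every $v$ under which $G\2$ admits no proper colouring (shrinking lists, we may take all sizes exactly $8$). Since the choosability of $G\2$ reduces to that of the squares of the components of $G$, we may assume $G$ is connected.

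The first block of work fixes the local structure of $G$. A vertex $v$ with $\deg_G(v)=d$ has at most $d+d(\Delta(G)-1)\le 3d$ neighbours in $G\2$, so a vertex of degree at most $2$ has at most $6$ neighbours in $G\2$ and can be coloured last; it then suffices to $L$--colour $G\2-v$. The recurring subtlety is that $G\2-v$ may properly contain $(G-v)\2$; however $G\2-v$ is a subgraph of $(G')\2$, where $G':=(G-v)+e$ and $e$ joins the (at most two) neighbours of $v$. Since $v$ has degree at most $2$, $G'$ is again subcubic with fewer vertices, and -- outside a bounded list of sporadic cases checked by hand -- is not the Petersen graph, so by minimality $(G')\2$, hence $G\2-v$, is $8$--choosable. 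Therefore $G$ is cubic. A similar ``colour it last, then reduce the rest'' argument, now using a more careful local surgery (deleting or identifying a small configuration, sometimes deleting an auxiliary edge, always keeping the reduced graph subcubic), shows $G$ is triangle--free and, more generally, rules out each member of a short list of small dense subgraphs; a handful of the reduced graphs happen to coincide with the Petersen graph or other tiny graphs and are disposed of individually.

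Next, the list form of Brooks' theorem -- a connected graph $H$ is $\Delta(H)$--choosable unless it is complete or an odd cycle -- settles every case but one. As $G$ is subcubic we have $\Delta(G\2)\le 9$, and $G\2$ is connected, so if $\Delta(G\2)\le 8$ then $G\2$ is $8$--choosable unless $G\2\iso K_9$; but the latter would force $G$ to be a cubic graph on nine vertices, which is impossible since a cubic graph has an even number of vertices. Hence $\Delta(G\2)=9$, so some vertex $v$ has a ball of radius $2$ consisting of exactly $10$ vertices: $v$, its neighbours $a,b,c$, and six pairwise distinct grandchildren $a_1,a_2,b_1,b_2,c_1,c_2$, with $a,b,c$ pairwise non-adjacent and no coincidence among the ten.

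The remaining -- and hardest -- step is a local analysis around such a ``saturated'' vertex $v$, and, where needed, around two of them adjacent in $G\2$. The goal is to exhibit a reducible configuration: delete or identify a bounded set $S$ of vertices near $v$, add the few edges needed so that $(G')\2$ still contains every pair of $V(G)\setminus S$ at distance at most $2$ in $G$, verify that $G'$ is subcubic, smaller, and not the Petersen graph, $L$--colour $(G')\2$ by minimality, and extend across $S$ -- the colours forced on the boundary of $S$ leaving each vertex of $S$ a free colour by a Hall--type or kernel--method argument that uses $|L|=8$ together with the bound $\Delta(G\2)=9$. I expect this case analysis to be the main obstacle: the ways in which $a_1,\dots,c_2$ and their further neighbours can coincide branch into many subcases; the fill--in edges must be inserted without ever creating a vertex of degree $4$; and in every subcase one must either produce the desired $L$--colouring or be forced to conclude that $G$ itself is the Petersen graph -- which is unavoidable, since by the theorem the Petersen graph is the unique subcubic graph whose square genuinely fails to be $8$--choosable, so each branch must ultimately drive the minimal counterexample to be Petersen, contradicting its choice.
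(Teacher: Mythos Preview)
The paper does not contain a proof of this theorem: it is quoted, with attribution, from Cranston and Kim~\cite{cranston} and used only as background motivation in the introduction. There is therefore nothing in the present paper to compare your attempt against.

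For what it is worth, your outline is broadly in the spirit of the actual Cranston--Kim argument (minimal counterexample, reduction to a cubic graph, and a reducible-configuration analysis), but as written it is a plan rather than a proof. You explicitly defer the decisive step --- the local analysis around a vertex with a full ball of radius~$2$ and the extension of a partial list colouring across the deleted configuration --- saying only that you ``expect this case analysis to be the main obstacle.'' That is precisely where all the work lies; none of the earlier reductions (cubicness, Brooks for $\Delta(G\2)\le 8$) is in doubt, and none of them isolates the Petersen graph. Until the configuration/extension step is actually carried out, the argument does not establish the theorem.
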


Totally silver graphs are regular graphs for which the lower bound of equation~\ref{eq:bound} is tight. We study such graphs in this paper.
In Section~\ref{sec:char} we present several constructive characterizations of
totally silver graphs, as well as a similar characterization of bipartite totally silver graphs. In Section~\ref{sec:triv} we give nontriviality conditions for cubic totally silver graphs. In Section~\ref{sec:eg} we present several families of cubic totally silver graphs. In Section~\ref{sec:gir} some cubic totally silver graphs of high girth are presented.


\section{Properties and Characterization of totally silver graphs}
\label{sec:char}

Let $c$ be a totally silver coloring of an $r$--regular graph~$G$. 
We define a {\em colored $2$--switch} in $G$ to be the following operation.
Let $u_1v_1$ and $u_2v_2$
be edges of $G$ such that $c(u_1)=c(u_2)$ and $c(v_1)=c(v_2)$. Let $G'$ be
obtained from $G$ by deleting the edges $u_1v_1$ and $u_2v_2$, and adding the
edges $u_1v_2$ and $u_2v_1$.
Since a colored $2$--switch does not affect the colors appearing on each closed
neighborhood, $c$ is also a totally silver coloring of~$G'$.
The following theorem gives a classification of $r$--regular totally silver graphs.

\begin{thm}{\em\cite{perfectcol}}
An $r$--regular graph is totally silver if and only if it can be obtained by
a sequence of colored $2$--switches from a disjoint union of $r+1$--cliques.
\label{thm:switch}
\end{thm}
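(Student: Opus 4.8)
The plan is to prove the two directions separately; the forward (``if'') direction is essentially free, and the converse (``only if'') carries all the content. For the \emph{if} direction: a disjoint union of $(r+1)$--cliques is totally silver via the coloring that gives each clique all $r+1$ colors exactly once, and the text already observes that a colored $2$--switch sends a totally silver coloring to a totally silver coloring. Hence, performing colored $2$--switches starting from such a disjoint union and keeping that coloring along the way, every graph we meet is totally silver (and it stays $r$--regular, since a colored $2$--switch alters the degree of no vertex); in particular the final graph is.

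For the \emph{only if} direction I would first record the structure a totally silver coloring $c$ of an $r$--regular graph $G$ forces. Writing $\{1,\dots,r+1\}$ for the colors and $V_i=c^{-1}(i)$, the condition on $N[v]$ makes $c$ proper and forces each $v\in V_i$ to have exactly one neighbor of every color $j\neq i$ and no neighbor of color $i$. Thus, for $i\neq j$, the bipartite subgraph on $V_i\cup V_j$ is $1$--regular, i.e.\ a perfect matching $M_{ij}$, all classes have a common size $m$, and $E(G)$ is the disjoint union of the edge sets of the $M_{ij}$ over pairs $i<j$. Now fix, once and for all, a labelling $V_i=\{v_{i,1},\dots,v_{i,m}\}$ of each class, and call $M^{*}_{ij}=\{\,v_{i,t}v_{j,t}\st 1\le t\le m\,\}$ the \emph{diagonal} matching; then $\bigcup_{i<j}M^{*}_{ij}$ is exactly the disjoint union of the $m$ cliques on the vertex sets $\{v_{1,t},\dots,v_{r+1,t}\}$. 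So it suffices to transform every $M_{ij}$ into $M^{*}_{ij}$ using colored $2$--switches.

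The engine is the observation that a colored $2$--switch on edges $u_1v_1,u_2v_2$ with $c(u_1)=c(u_2)=a$ and $c(v_1)=c(v_2)=b$ only deletes and inserts edges between $V_a$ and $V_b$: it replaces $M_{ab}$ by another perfect matching on $V_a\cup V_b$ and leaves every other $M_{ij}$ unchanged. Identifying a perfect matching between $V_i$ and $V_j$ with a permutation of $\{1,\dots,m\}$, a colored $2$--switch amounts to multiplying that permutation by a transposition; since transpositions generate the symmetric group, we can, pair by pair, drive each $M_{ij}$ to $M^{*}_{ij}$ without disturbing pairs already handled. After all $\binom{r+1}{2}$ pairs are processed, $G$ has become a disjoint union of $(r+1)$--cliques. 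Finally, a colored $2$--switch is undone by another colored $2$--switch (switch $u_1v_2,u_2v_1$ back to $u_1v_1,u_2v_2$), so reversing the whole sequence presents the original $G$ as the result of colored $2$--switches applied to a disjoint union of $(r+1)$--cliques.

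I expect the one step needing care to be this \emph{non--interference} claim — that operating on a single color pair leaves the other matchings intact — since it is precisely what lets the reductions be carried out independently and then concatenated; along the way it quietly uses that the four vertices of a colored $2$--switch are distinct and that the two newly added edges were not already present, both of which follow from the perfect--matching structure established above. The remaining ingredients (transpositions generate $S_m$, and colored $2$--switches are self--reversing) are routine.
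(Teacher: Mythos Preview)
The paper does not supply its own proof of this theorem: it is quoted from~\cite{perfectcol} and immediately used, so there is nothing to compare your argument against on the level of approach. That said, your proof is correct and complete. The structural observation that a totally silver coloring decomposes $E(G)$ into $\binom{r+1}{2}$ perfect matchings $M_{ij}$ between color classes is exactly the content of the paper's Corollary~\ref{cor:construction}, and your key step---that a colored $2$--switch on colors $a,b$ composes the permutation encoding $M_{ab}$ with a transposition while leaving every other $M_{ij}$ untouched---is both the right idea and correctly justified. The care you flag about distinctness of the four vertices and absence of the two new edges is indeed handled by the $1$--regularity of the bipartite pieces, and the reversibility remark closes the argument cleanly.
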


The following is immediate from this theorem.

\begin{cor}
In a totally silver coloring of an $r$--regular graph $G$, all color classes
have equal size. In particular, $|V(G)|$ is a multiple of~$r+1$.
\label{cor:colorclasses}
\end{cor}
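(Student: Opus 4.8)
The plan is to read Theorem~\ref{thm:switch} as a statement about \emph{colored} graphs and to observe that the one quantity we care about --- the multiset of color-class sizes --- is invariant under colored $2$--switches.

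First I would fix an $r$--regular totally silver graph $G$ together with a totally silver coloring $c$. By Theorem~\ref{thm:switch}, the pair $(G,c)$ arises from a disjoint union of $r+1$--cliques, equipped with a totally silver coloring, by a finite sequence of colored $2$--switches; running this sequence backwards --- each colored $2$--switch being its own inverse --- transforms $(G,c)$ into a disjoint union $H$ of $r+1$--cliques while carrying $c$ to a totally silver coloring $c'$ of $H$. The key point is that a colored $2$--switch, by its very definition, deletes two edges and adds two edges but does not alter the vertex set or the color of any vertex; hence it leaves every color class unchanged as a set of vertices. Consequently $c$ and $c'$ have the same color classes, under the natural identification of $V(G)$ with $V(H)$, and in particular the same list of color-class sizes.

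Next I would analyze $H$. In any totally silver coloring of $K_{r+1}$ every one of the $r+1$ colors appears exactly once, so within each clique component of $H$ each color class meets that component in exactly one vertex. Writing $t$ for the number of clique components of $H$, it follows that each color class of $c'$ has size exactly $t$. By the previous paragraph each color class of $c$ also has size $t$, so all color classes of $c$ are equal in size. Since the $r+1$ color classes partition $V(G)$, we obtain $|V(G)| = t(r+1)$, which is a multiple of $r+1$.

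There is essentially no hard step here; the only point requiring a little care is to make sure we transport the \emph{given} coloring $c$ through the sequence of switches, rather than invoking some unrelated totally silver coloring of $G$, and to record explicitly that colored $2$--switches act as the identity on the colors of vertices, which is what makes the color-class sizes a genuine invariant of the process.
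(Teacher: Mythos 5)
Your proof is correct and follows exactly the route the paper intends: the paper states the corollary as immediate from Theorem~\ref{thm:switch}, and your argument simply spells out that deduction --- colored $2$--switches fix the vertex colors, so the color-class sizes of the given coloring coincide with those of a disjoint union of $r+1$--cliques, where each class has size equal to the number of cliques. Your explicit remark about transporting the \emph{given} coloring through the switch sequence is the right point of care, and nothing further is needed.
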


The following theorem gives a necessary condition for a graph being totally silver.

\begin{thm}{\em\cite{strongcol}}
Let $r$ be odd. Then every $r$--regular totally silver graph is $r$--edge colorable.
\label{thm:edgecol}
\end{thm}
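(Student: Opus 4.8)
The plan is to build an explicit $r$-edge-coloring of $G$ by pulling back a proper edge-coloring of the complete graph $K_{r+1}$ along the given totally silver vertex coloring. The point is that a totally silver coloring is in particular a proper coloring, hence a graph homomorphism $G\to K_{r+1}$, and it is moreover injective on the neighborhood of every vertex; such a ``locally edge-injective'' homomorphism lifts a proper edge-coloring of the target to one of the source.

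First I would fix a totally silver coloring $c\colon V(G)\to\{1,\dots,r+1\}$; this uses exactly $r+1$ colors, since an $r$-regular totally silver graph does. Record the basic structural fact: for each vertex $v$, the $r+1$ vertices of $N[v]$ carry all $r+1$ colors, each exactly once, so the $r$ neighbors of $v$ carry precisely the $r$ colors different from $c(v)$, one of each. In particular, if $uv$ and $uw$ are distinct edges of $G$ sharing the vertex $u$, then $c(v)\neq c(w)$, and of course $c(v),c(w)\neq c(u)$.

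Next, since $r$ is odd, $r+1$ is even, so $K_{r+1}$ admits a $1$-factorization, i.e.\ a proper edge-coloring $\phi$ with $r$ colors: to each unordered pair $\{i,j\}$ of distinct elements of $\{1,\dots,r+1\}$ it assigns $\phi(\{i,j\})\in\{1,\dots,r\}$ so that, for each $i$, the $r$ pairs containing $i$ receive all $r$ colors. Now define $f(uv)=\phi(\{c(u),c(v)\})$ for $uv\in E(G)$; this is well defined because $c(u)\neq c(v)$ whenever $uv$ is an edge. If $uv$ and $uw$ are distinct edges at a common vertex $u$, then by the structural fact $\{c(u),c(v)\}$ and $\{c(u),c(w)\}$ are distinct edges of $K_{r+1}$ meeting at the vertex $c(u)$, so $\phi$ colors them differently, whence $f(uv)\neq f(uw)$. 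Thus $f$ is a proper $r$-edge-coloring of $G$.

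There is essentially no obstacle once the pull-back idea is in place; the only spot where oddness of $r$ is used is the existence of the $1$-factorization of $K_{r+1}$, which is exactly what fails for even $r$ (then $K_{r+1}$ has odd order and is class~$2$). The mildly non-obvious step is recognizing that the totally silver condition says precisely that $c$ is injective on each $N[v]$, which is what makes the lift of $\phi$ a \emph{proper} edge-coloring rather than merely an edge-coloring.
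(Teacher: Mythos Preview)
Your argument is correct. Note, however, that the paper does not supply its own proof of this theorem: it is quoted from~\cite{strongcol} and stated without proof, so there is nothing to compare your approach against. Your pull-back of a $1$-factorization of $K_{r+1}$ along the totally silver coloring is the natural proof and uses exactly the right observation---that the totally silver condition makes $c$ injective on each $N[v]$, so distinct edges at $u$ map to distinct edges of $K_{r+1}$ at $c(u)$.
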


In the remainder of this section we focus on bipartite totally silver graphs
and give a characterization similar to that of Theorem~\ref{thm:switch} for such graphs.

\begin{lem}
Let $G$ be an $r$--regular bipartite totally silver graph with a bipartition $(X,Y)$,
and let $c$ be a totally silver coloring of $G$ with color classes $C_0,C_1,\ldots,C_r$.
Then $|C_j\cap X|=|C_j\cap Y|$ and this value is independent of~$j$.
In other words, every color classes meets every partite set at the same number of
vertices.
\label{lem:bip}
\end{lem}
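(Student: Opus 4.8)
The plan is to run a double-counting argument on the edges joining one colour class restricted to $X$ with another colour class restricted to $Y$. For $0\le j\le r$ put $x_j=|C_j\cap X|$ and $y_j=|C_j\cap Y|$. The two assertions to be proved are that $x_j=y_j$ for every $j$, and that this common value is the same for all~$j$.

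The key local observation is that, since $G$ is bipartite, every vertex of $X$ has all of its neighbours in $Y$. Hence if $u\in C_i\cap X$, then among the $r$ neighbours of $u$ (all lying in $Y$) each of the $r$ colours different from $i$ occurs exactly once, because $c$ is totally silver on $N[u]$. Consequently, for any two distinct colours $i$ and $j$, every vertex of $C_i\cap X$ is joined to exactly one vertex of $C_j\cap Y$, and distinct such vertices use distinct edges, so the number of edges between $C_i\cap X$ and $C_j\cap Y$ equals $x_i$. Running the same argument from the $Y$-side, every vertex of $C_j\cap Y$ has exactly one neighbour of colour $i$, so that same number of edges equals $y_j$. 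Therefore
\[
x_i=y_j\qquad\text{whenever }i\neq j.
\]

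To finish, I would use that there are at least three colours, i.e.\ $r\ge 2$ (the cases $r\le 1$ are degenerate and are tacitly excluded). Given any two colours $a$ and $b$, pick a colour $c\notin\{a,b\}$; then $x_a=y_c=x_b$, so all the $x_j$ are equal, and symmetrically all the $y_j$ are equal. Combining this with the displayed identity for any single pair $i\neq j$ shows that the common value of the $x_j$ coincides with the common value of the $y_j$, which is exactly the statement of the lemma. As a sanity check, summing over $j$ gives $(r+1)x_0=|X|$ and likewise for $Y$, so the common value is $|V(G)|/\bigl(2(r+1)\bigr)$, consistent with Corollary~\ref{cor:colorclasses}.

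There is no serious obstacle here; the whole argument is an incidence count. The only points needing care are setting up the bipartite edge count correctly — using that the neighbours of an $X$-vertex lie entirely in $Y$, so that ``having a neighbour of colour $j$'' and ``having such a neighbour in $Y$'' coincide — and noticing that chaining the equalities $x_i=y_j$ into ``all $x_j$ equal'' requires a third colour, hence the hypothesis $r\ge 2$.
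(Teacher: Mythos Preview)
Your argument is correct. Both proofs are edge counts in the bipartite graph, but they slice the count differently. The paper counts all $r\,|X\cap C_j|$ edges leaving $X\cap C_j$ and matches this against $|Y\setminus C_j|$; it then invokes Corollary~\ref{cor:colorclasses} (equality of the $|C_j|$) to conclude that $|X\cap C_j|$ is independent of~$j$, and finally uses $|X|=|Y|$. You instead count edges between a single pair $C_i\cap X$ and $C_j\cap Y$ and obtain the sharper identity $x_i=y_j$ for all $i\neq j$, from which everything follows by chaining through a third colour. Your route is self-contained (no appeal to Corollary~\ref{cor:colorclasses}) and in fact proves a little more: the induced bipartite graph between $C_i\cap X$ and $C_j\cap Y$ is a perfect matching, which is exactly the content of Corollary~\ref{cor:bipconst} later in the section. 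Both arguments tacitly need $r\ge2$; you flag this explicitly, while the paper's division by $r-1$ hides the same restriction.
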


\begin{proof}
Let $0\le j\le r$. Since every $x\in X\cap C_j$ has $r$ neighbors in $Y$,
and since every $y\in Y\setminus C_j$ is adjacent to exactly one $x\in X\cap C_j$,
we have \[r|X\cap C_j|=|Y\setminus C_j|=|Y|-|Y\cap C_j|.\]
Rearranging the terms we obtain
\[(r-1)|X\cap C_j|=|Y|-|X\cap C_j|-|Y\cap C_j|=|Y|-|C_j|.\]
The right-hand side of this equation is independent of $j$ since by
Corollary~\ref{cor:colorclasses} we have $|C_0|=|C_1|=\cdots=|C_r|$.
Thus $|X\cap C_j|=|X|/(r+1)$, and similarly $|Y\cap C_j|=|Y|/(r+1)$.
On the other hand, since $G$ is a regular graph, we have $|X|=|Y|$.
Therefore, $|X\cap C_j|=|Y\cap C_j|=|V(G)|/(2r+2)$.
\end{proof}

\begin{cor}
The order of every $r$--regular bipartite totally silver graph is a multiple of~$2r+2$.
\label{cor:2r+2}
\end{cor}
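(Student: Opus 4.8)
The plan is to obtain this as an immediate consequence of Lemma~\ref{lem:bip}. In the course of proving that lemma we showed that, for each color class $C_j$ of a totally silver coloring of an $r$--regular bipartite graph $G$ with bipartition $(X,Y)$, one has $|X\cap C_j| = |Y\cap C_j| = |V(G)|/(2r+2)$. The key observation is simply that $|X\cap C_j|$ is the cardinality of a set and is therefore a nonnegative integer; hence $2r+2$ divides $|V(G)|$. That is the entire argument. (Alternatively, one can combine $|V(G)| = (r+1)|C_j|$ from Corollary~\ref{cor:colorclasses} with the fact that regularity of $G$ forces the two partite sets to have equal size, but this is really the same computation repackaged.)

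There is essentially no obstacle here: the arithmetic was already carried out inside the proof of Lemma~\ref{lem:bip}, and the corollary merely extracts the divisibility statement implicit in the identity $|X\cap C_j| = |V(G)|/(2r+2)$. The one point worth a word is that this displayed formula relies on $|X| = |Y|$, which in turn uses that $G$ is regular with $r\ge 1$; so for the purpose of this corollary we take $r\ge 1$, as is the case throughout the paper, and under that mild hypothesis the claim follows.
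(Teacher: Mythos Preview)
Your proposal is correct and matches the paper's intent exactly: the corollary is stated immediately after Lemma~\ref{lem:bip} with no separate proof, precisely because the identity $|X\cap C_j|=|V(G)|/(2r+2)$ established there makes the divisibility claim immediate. Your remark that this hinges on $|X|=|Y|$ (hence on $r\ge1$) is apt; one might further note that the derivation in the lemma divides by $r-1$, so the case $r=1$ is degenerate (indeed $K_2$ has order~$2$, not a multiple of~$4$), but this is a wrinkle in the paper's own statement rather than in your argument.
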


Let $G$ be a bipartite $r$--regular totally silver graph with a bipartition $(X,Y)$,
and let $c$ be a totally silver coloring of~$G$.
The colored $2$--switch introduced above does not preserve the bipartite property
of a graph, since the new edges added may create an odd cycle.
We may amend this operation to resolve this issue by adding the condition that
a colored $2$--switch on the edges $u_1v_1$ and $u_2v_2$ with $c(u_1)=c(u_2)$ and
$c(v_1)=c(v_2)$ is permitted, only when
$u_1$ and $u_2$ belong to the same partite set of $G$. We may refer to this
operation as the {\em bipartite colored $2$--switch}. For every positive integer
$r$ we define a graph $B_r$ to have vertex set
\[V(B_r)=\{x_0,x_1,\ldots,x_r\}\cup\{y_0,y_1,\ldots,y_r\},\]
and edge set
\[E(B_r)=\{x_i y_j\st 0\le i,j\le r\text{ and }i\not=j\}.\]
Then $B_r$ is $r$--regular and bipartite, and $c:V(B_r)\to\{0,1,\ldots,r\}$
defined by $c(x_i)=c(y_i)=i$ for $i=0,1,\ldots,r$ is a totally silver coloring
of~$B_r$. 
Indeed $B_r$ is obtained from a complete bipartite graph
$K_{r+1,r+1}$ by deleting the edges of a perfect matching. The graph $B_2$ is
isomorphic to $C_6$ and $B_3$ is isomorphic to the hypercube graph~$Q_3$.
It is immediate from Lemma~\ref{lem:bip} that $B_r$ is the unique $r$--regular bipartite
totally silver graph of order~$2r+2$.
The following theorem gives a characterization of bipartite totally silver graphs.

\begin{thm}
An $r$--regular bipartite graph is totally silver if and only if it can be obtained
by a sequence of bipartite colored $2$--switches from a disjoint union of copies
of~$B_r$.
\label{thm:bipchar}
\end{thm}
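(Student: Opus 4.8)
The plan is to follow the pattern of the proof of Theorem~\ref{thm:switch}, using the counting supplied by Lemma~\ref{lem:bip}. Sufficiency is routine and I would dispose of it first: a disjoint union of copies of $B_r$ is $r$--regular, bipartite, and totally silver (with the explicit coloring described above), while a bipartite colored $2$--switch preserves all vertex degrees, preserves bipartiteness thanks to the defining requirement that $u_1,u_2$ lie in a common partite set, and preserves the totally silver property because it is in particular a colored $2$--switch. Hence every graph obtainable from a disjoint union of copies of $B_r$ by bipartite colored $2$--switches is again $r$--regular, bipartite, and totally silver.

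For necessity, fix an $r$--regular bipartite totally silver graph $G$ with bipartition $(X,Y)$ and a totally silver coloring $c$ with color classes $C_0,\dots,C_r$. Set $n=|V(G)|/(2r+2)$, which is a positive integer by Corollary~\ref{cor:2r+2}, and by Lemma~\ref{lem:bip} fix labelings $X\cap C_i=\{x_i^1,\dots,x_i^n\}$ and $Y\cap C_i=\{y_i^1,\dots,y_i^n\}$ for every color $i$. The key structural observation is that for each ordered pair $(i,j)$ of distinct colors, the edges of $G$ joining $X\cap C_i$ to $Y\cap C_j$ form a perfect matching $M_{ij}$ between these two $n$--element sets: each $x\in X\cap C_i$ sees the color $j$ exactly once on $N[x]$, hence has exactly one neighbor in $Y\cap C_j$, and symmetrically each $y\in Y\cap C_j$ has exactly one neighbor in $X\cap C_i$. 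Since adjacent vertices always receive distinct colors, $E(G)$ is the disjoint union of the matchings $M_{ij}$ over the $(r+1)r$ ordered pairs of distinct colors.

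Next I would translate a bipartite colored $2$--switch into this language. If such a switch acts on edges $u_1v_1,u_2v_2$ with, say, $u_1,u_2\in X$ (the case $u_1,u_2\in Y$ being symmetric), then $c(u_1)=c(u_2)=i$ and $c(v_1)=c(v_2)=j$ for some distinct colors $i,j$, so the two deleted edges and the two added edges $u_1v_2,u_2v_1$ all lie in $M_{ij}$, while every other matching is untouched. Viewing $M_{ij}$ as a bijection $X\cap C_i\to Y\cap C_j$, the switch post-composes this bijection with a transposition; conversely, every transposition arises this way from a legitimate bipartite colored $2$--switch, since the required color equalities hold automatically and the two new edges are not already present (any edge between $X\cap C_i$ and $Y\cap C_j$ lies in the current $M_{ij}$, which contains neither of them). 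As transpositions generate the full symmetric group, a suitable sequence of bipartite colored $2$--switches affecting only $M_{ij}$ carries it to the diagonal bijection $x_i^k\mapsto y_j^k$, and these switches change no other matching.

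Finally, performing this normalization for all $(r+1)r$ ordered pairs in turn transforms $G$ into the graph with edge set $\{x_i^k y_j^k : i\neq j,\ 1\le k\le n\}$, which is exactly the disjoint union over $k\in\{1,\dots,n\}$ of copies of $B_r$. Since a bipartite colored $2$--switch is its own inverse operation (its defining conditions are symmetric in the two chosen edges), reversing this sequence exhibits $G$ as obtained from a disjoint union of copies of $B_r$ by bipartite colored $2$--switches, completing the proof. The crux of the whole argument is the matching decomposition of $E(G)$ together with the fact that a bipartite colored $2$--switch is precisely a transposition on a single $M_{ij}$; once these are established, the remainder is the elementary fact that transpositions generate the symmetric group. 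The only point needing a little care is the bookkeeping that each switch used is legitimate and localized to one matching, and this is immediate from the decomposition, so I do not anticipate a substantive obstacle.
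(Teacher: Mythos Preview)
Your proof is correct, but it is organized differently from the paper's. The paper argues by induction on $|V(G)|$: it selects one representative $u_i\in X\cap C_i$ and one $v_i\in Y\cap C_i$ for each color, and for every pair $i\neq j$ performs (at most) a single switch to create the edge $u_iv_j$; this peels off one copy of $B_r$, and induction finishes the job. You instead give a direct, non-inductive argument: you make the matching decomposition $E(G)=\bigcup_{i\neq j}M_{ij}$ explicit, observe that a bipartite colored $2$--switch is exactly a transposition acting on a single $M_{ij}$, and then invoke the fact that transpositions generate the symmetric group to normalize every $M_{ij}$ to the diagonal bijection. Both arguments rest on the same structural fact (the perfect matchings $M_{ij}$), and both need the simplicity check that the new edges are absent, which you handle cleanly. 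Your route has the advantage of identifying precisely what the switch operation does at the level of the matchings, which makes the mechanism transparent; the paper's inductive route is slightly more concrete and avoids any appeal to the symmetric group, at the cost of hiding the global picture.
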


\begin{proof}
In this proof, a {\em switch} refers to a bipartite colored $2$--switch as defined
above.
It is immediate from the definition of a switch that if $G$ is obtained from an
$r$--regular bipartite totally silver graph by means of repeated applications of
switches, then $G$ is $r$--regular, bipartite, and totally silver.

Let $G$ be an $r$--regular bipartite totally silver graph with a bipartition $(X,Y)$.
Let $B$ be a disjoint union of copies of $B_r$ which has the same order as~$G$.
Since a sequence of switches can be reversed in an obvious way, it suffices to
prove that $B$ is obtained from $G$ via a sequence of switches.
We proceed by induction on $|V(G|$. If $|V(G)|=2r+2$, then $G$ is isomorphic to
$H=B_r$ since $B_r$ is the only $r$--regular bipartite totally silver graph of
order~$2r+2$. Let $|V(G)|>2r+2$, and let $c:V(G)\to\{0,1,\ldots,r\}$ be a totally
silver coloring of~$G$.
Let $U=\{u_0,u_1,\ldots,u_r\}\subset X$ and $V=\{v_0,v_1,\ldots,v_r\}\subset Y$
such that $c(u_i)=c(v_i)=i$ for all $i=0,1,\ldots,r$. Such sets $U$ and $V$
exist by Lemma~\ref{lem:bip}. Let $0\le i,j\le r$ with $i\not=j$.
Then there are unique vertices $y\in N(u_i)$ and $x\in N(v_j)$
such that $c(y)=j$ and $c(x)=i$. If $y=v_j$ (and hence $x=u_i$), we do nothing.
Otherwise, we switch the edges $u_i y$ and $v_j x$, which creates the
edge~$u_iv_j$. Performing this operation for all pairs of colors $i$ and $j$,
we obtain a graph $G'$ in which $U\cup V$ induces a subgraph isomorphic to~$B_r$.
Since $G'$ is $r$--regular, it is a disjoint union of this subgraph and a
graph~$H$. By the induction hypothesis, $H$ is obtained by a sequence of switches
from a disjoint union of copies of~$B_r$. The sequence of switches introduced above,
followed by this  latter sequence, gives a sequence of switches which transforms
$G$ to~$B$.
\end{proof}

We conclude this section by some reformulations of the results of Theorem~\ref{thm:switch} and Theorem~\ref{thm:bipchar}.
Theorem~\ref{thm:switch} gives way to the following characterization of $r$--regular
totally silver graphs.

\begin{cor}
A graph $G$ is totally silver if and only if $V(G)$ can be partitioned to
a union of $r+1$ disjoint independent sets $C_0,C_1,\ldots,C_r$ such that
for all distinct $i,j\in\{0,1,\ldots,r\}$, the subgraph induced by $G$
on $C_i\cup C_j$ is $1$--regular.
\label{cor:construction}
\end{cor}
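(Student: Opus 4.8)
The plan is to derive this corollary directly from Theorem~\ref{thm:switch}, since both directions amount to recasting ``obtainable by colored $2$--switches from a disjoint union of $r+1$--cliques'' in terms of the structure of the color classes.

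For the forward direction, suppose $G$ is totally silver and let $r$ be such that $G$ is $r$--regular (recall that a totally silver graph is automatically regular). Fix a totally silver coloring $c$ with color classes $C_0,\ldots,C_r$. Each $C_i$ is independent because $c$ is a proper coloring (if $u\sim v$ then $u\in N[v]$, so $c(u)\not=c(v)$). Now fix distinct $i,j$ and consider the subgraph $H$ induced on $C_i\cup C_j$. Take any $u\in C_i$. Since color $j$ appears exactly once on $N[u]$ and $c(u)=i\not=j$, there is exactly one neighbor of $u$ of color $j$; that neighbor lies in $C_j$, and it is the unique neighbor of $u$ inside $C_i\cup C_j$ (all other neighbors have colors different from $i$ and $j$, and $C_i$ is independent). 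So every vertex of $C_i$ has degree exactly $1$ in $H$; by symmetry the same holds for every vertex of $C_j$. Hence $H$ is $1$--regular. This gives the desired partition. Note that this argument does not even need Theorem~\ref{thm:switch}.

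For the converse, suppose $V(G)$ is partitioned into independent sets $C_0,\ldots,C_r$ with each $G[C_i\cup C_j]$ being $1$--regular. First observe $G$ is $r$--regular: a vertex $v\in C_i$ has, for each $j\not=i$, exactly one neighbor in $C_j$ (its partner in the $1$--regular graph $G[C_i\cup C_j]$), and no neighbor in $C_i$, so $\deg v=r$. Define $c(v)=i$ for $v\in C_i$; this is proper since each $C_i$ is independent, and for each $v$ the colors on $N[v]$ are exactly $0,1,\ldots,r$, each once — so $c$ is a totally silver coloring and $G$ is totally silver. (Alternatively, to stay in the spirit of the preceding results, one invokes Theorem~\ref{thm:switch}: the edge set of $G$ decomposes into the $\binom{r+1}{2}$ perfect matchings $M_{ij}$ between $C_i$ and $C_j$, and one can undo $2$--switches color-class by color-class to reach a disjoint union of $r+1$--cliques; but the direct verification above is cleaner.)

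The only mild subtlety — and the step I would be most careful about — is making sure the value $r$ is well defined and that the statement is read correctly: the corollary is implicitly quantified ``there exists $r$ such that\ldots'', and one must check that the $r$ appearing in the partition condition is forced to equal the regularity degree, which is exactly the $r$--regularity computation above. Everything else is a routine unwinding of the definition of a totally silver coloring, so I do not anticipate a genuine obstacle; the corollary is essentially a restatement of the defining property with ``closed neighborhood'' conditions repackaged as ``$1$--regular between pairs of classes''.
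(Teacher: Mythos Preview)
Your proof is correct. The paper does not supply an explicit proof of this corollary at all; it simply introduces it with the remark that Theorem~\ref{thm:switch} ``gives way to'' this characterization, treating it as immediate. Your argument works directly from the definition of a totally silver coloring (each color appears exactly once on each closed neighborhood), which, as you yourself observe, makes the detour through Theorem~\ref{thm:switch} unnecessary. The alternative you sketch---decomposing $G$ into the matchings $M_{ij}$ and reversing colored $2$--switches to reach a union of cliques---is presumably closer to what the authors had in mind when calling it a corollary of Theorem~\ref{thm:switch}, but your direct verification is both shorter and self-contained.
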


This characterization can be viewed as a way of constructing an $r+1$--regular totally silver graph from any existing $r$--regular totally silver graph.

\begin{cor}
Let $G$ be an $r$--regular  totally silver graph and let $C_0,C_1,\ldots,C_r$ be
as in Corollary~\ref{cor:construction}. Let $C_{r+1}$ be a set disjoint from $V(G)$ such that $|C_{r+1}|=|C_0|$. Then an $r+1$--regular totally silver graph $H$ can be constructed from $G$ by letting $V(H)=V(G)\cup C_{r+1}$, and adding a perfect matching between $C_{r+1}$ and $C_i$ for any $i\in\{0,1,\ldots,r\}$. Moreover, any $r+1$--regular totally silver graph can be constructed from a suitable $r$--regular totally silver graph in this way.
\label{cor:augment}
\end{cor}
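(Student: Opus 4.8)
The plan is to verify the two halves of the statement directly from Corollary~\ref{cor:construction}, which characterizes $r$-regular totally silver graphs via a partition of the vertex set into independent sets $C_0,\ldots,C_r$ whose pairwise unions induce $1$-regular subgraphs. Note first that all the sets $C_i$ have the same size: for $i\ne j$, the induced subgraph on $C_i\cup C_j$ being $1$-regular forces a perfect matching between $C_i$ and $C_j$, hence $|C_i|=|C_j|$. So the hypothesis $|C_{r+1}|=|C_0|$ just says $C_{r+1}$ matches the common size, and perfect matchings between $C_{r+1}$ and each $C_i$ exist.

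For the forward direction, I would take $H$ as constructed and check the hypotheses of Corollary~\ref{cor:construction} with the partition $C_0,\ldots,C_r,C_{r+1}$. Each $C_i$ with $i\le r$ is still independent in $H$ since we only added edges incident to $C_{r+1}$, and $C_{r+1}$ is independent because we added only a matching from it to each $C_i$ (no edges inside $C_{r+1}$). For $i,j\le r$ the induced subgraph on $C_i\cup C_j$ is unchanged, hence $1$-regular; for $i\le r$ and $j=r+1$ the induced subgraph on $C_i\cup C_{r+1}$ is exactly the perfect matching we added, hence $1$-regular. Finally $H$ is $(r+1)$-regular: a vertex in $C_i$ ($i\le r$) had degree $r$ in $G$ and gained exactly one edge to $C_{r+1}$, while a vertex of $C_{r+1}$ is matched once into each of the $r+1$ sets $C_0,\ldots,C_r$. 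By Corollary~\ref{cor:construction}, $H$ is totally silver.

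For the converse, let $H$ be any $(r+1)$-regular totally silver graph, and apply Corollary~\ref{cor:construction} to get a partition $C_0,\ldots,C_{r+1}$ into independent sets with all pairwise unions inducing $1$-regular subgraphs. Single out $C_{r+1}$, set $G = H - C_{r+1}$, and keep the partition $C_0,\ldots,C_r$ of $V(G)$. Each vertex of $C_i$ ($i\le r$) loses exactly one edge when $C_{r+1}$ is deleted, namely its unique match in $C_{r+1}$, so $G$ is $r$-regular; the sets $C_i$ remain independent, and each $C_i\cup C_j$ ($i,j\le r$) still induces the same $1$-regular subgraph, so by Corollary~\ref{cor:construction} $G$ is $r$-regular totally silver with the stated color classes. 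Moreover $H$ is recovered from $G$ by adjoining $C_{r+1}$ and restoring, for each $i\le r$, the perfect matching between $C_{r+1}$ and $C_i$ (which is $1$-regular by the corollary applied to $H$), so $H$ arises from $G$ by exactly the construction described. The only mild subtlety is to note that one may need to relabel the color classes of $H$ so that the deleted class is called $C_{r+1}$; since the roles of the classes are symmetric, this is harmless.
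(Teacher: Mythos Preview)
Your proposal is correct and is exactly the natural verification the paper has in mind: the paper states this result as an immediate corollary of Corollary~\ref{cor:construction} without giving a separate proof, and your argument simply unpacks that implication in both directions.
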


A similar characterization of bipartite totally silver graphs can be formulated
as a corollary of Theorem~\ref{thm:bipchar} as follows.

\begin{cor}
A bipartite graph $G$ with bipartition $(X,Y)$ is totally silver,
if and only if $X=U_0\cup U_1\cup\cdots\cup U_r$ and $Y=V_0\cup V_1\cup\cdots\cup V_r$
where the sets $U_i$ and $V_j$ are pairwise disjoint, such that
for any distinct $i,j\in\{0,1,\ldots,r\}$, the subgraph induced by $G$
on $U_i\cup V_j$ is $1$--regular.
\label{cor:bipconst}
\end{cor}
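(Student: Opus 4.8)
The plan is to derive this corollary directly from Theorem~\ref{thm:bipchar}, following exactly the pattern by which Corollary~\ref{cor:construction} follows from Theorem~\ref{thm:switch}. The key observation is that the partition structure described in the statement is precisely what a totally silver coloring of a bipartite graph produces, and conversely such a partition defines a totally silver coloring.

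First I would prove the forward direction. Suppose $G$ with bipartition $(X,Y)$ is totally silver, and fix a totally silver coloring $c:V(G)\to\{0,1,\ldots,r\}$; note $G$ is $r$--regular by the opening remark of Section~\ref{sec:intro}. For each $i$ set $U_i=c^{-1}(i)\cap X$ and $V_i=c^{-1}(i)\cap Y$. These sets are pairwise disjoint and partition $X$ and $Y$ respectively, and each is independent since a color class of a proper coloring is independent (a totally silver coloring is in particular proper, as each color appears once on $N[v]$, hence at most once among the neighbors of $v$). Now fix distinct $i,j$. Every vertex $u\in U_i$ has exactly one neighbor of color $j$ in its closed neighborhood; since $u$ itself has color $i\neq j$, this neighbor lies in $N(u)$, and being a neighbor of a vertex of $X$ it lies in $Y$, hence in $V_j$. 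Symmetrically every $v\in V_j$ has exactly one neighbor in $U_i$. Thus the bipartite subgraph induced on $U_i\cup V_j$ has every vertex of degree exactly $1$, i.e.\ it is $1$--regular.

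For the converse, suppose $X=\bigcup_i U_i$ and $Y=\bigcup_j V_j$ with the stated disjointness and $1$--regularity properties. Define $c$ by $c(x)=i$ for $x\in U_i$ and $c(y)=j$ for $y\in V_j$. I must check that each color appears exactly once on $N[v]$ for every $v$. Take $v\in U_i$ (the case $v\in V_j$ is symmetric). Then $c(v)=i$, so color $i$ appears on $v$ itself; and $v$ has no neighbor of color $i$, because a neighbor of $v$ lies in $Y$, so it lies in some $V_i$, but the subgraph on $U_i\cup V_i$ is $1$--regular only when $i\neq i$ is false---so instead I argue that for each $k\neq i$, $v$ has exactly one neighbor in $V_k$ by $1$--regularity of the subgraph on $U_i\cup V_k$, and $v$ has no neighbor in $V_i$: indeed $r=\sum_{k\neq i}|N(v)\cap V_k| + |N(v)\cap V_i|$, and since there are $r$ indices $k\neq i$ each contributing exactly $1$, we get $|N(v)\cap V_i|=0$. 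Hence on $N[v]$ color $i$ appears once (at $v$) and each color $k\neq i$ appears once (at the unique neighbor in $V_k$), so $c$ is totally silver.

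Alternatively, and more in the spirit of the section, one can avoid the direct verification by invoking Theorem~\ref{thm:bipchar}: $B_r$ admits such a partition (take $U_i=\{x_i\}$, $V_i=\{y_i\}$), the property is clearly preserved under disjoint union, and a routine check shows a bipartite colored $2$--switch preserves it as well, since the switch only redistributes edges between pairs $U_i\cup V_j$ in a degree-preserving way; conversely any $G$ with the partition property has the associated coloring $c$ as above and is therefore totally silver. The only mildly delicate point in either route is confirming that no vertex of $U_i$ has a neighbor in $V_i$, which as shown follows from a counting argument using $r$--regularity together with the $r$ distinct $1$--regular subgraphs $U_i\cup V_k$ for $k\neq i$; everything else is bookkeeping.
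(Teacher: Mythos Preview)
The paper gives no proof of this corollary; it is simply asserted to follow from Theorem~\ref{thm:bipchar} in parallel with Corollary~\ref{cor:construction}. Your forward direction is correct and is the intended argument: set $U_i=c^{-1}(i)\cap X$, $V_i=c^{-1}(i)\cap Y$, and read off $1$--regularity of $G[U_i\cup V_j]$ from the defining property of a totally silver coloring.

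Your converse, however, has a genuine circularity. You write $r=\sum_{k\neq i}|N(v)\cap V_k|+|N(v)\cap V_i|$ and deduce $|N(v)\cap V_i|=0$; but the left side equals $\deg(v)$, and nothing in the hypotheses of this direction says $\deg(v)=r$. You even say explicitly at the end that the counting argument ``uses $r$--regularity'', without saying where that comes from. In fact the converse as literally stated is false without a further assumption: take $G$ to be $B_2\cong C_6$ with the extra edge $x_0y_0$ adjoined, and $U_i=\{x_i\}$, $V_i=\{y_i\}$ for $i=0,1,2$; then $G[U_i\cup V_j]$ is the single edge $x_iy_j$ for every $i\neq j$, yet $G$ is not even regular, hence not totally silver. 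What the paper tacitly intends, by analogy with the independence of the $C_i$ in Corollary~\ref{cor:construction}, is that there are no edges between $U_i$ and $V_i$ (equivalently, that the associated map $c$ is a proper coloring). With that extra hypothesis your argument goes through cleanly: each $v\in U_i$ then has all its neighbours in $\bigcup_{k\neq i}V_k$, exactly one in each, so $\deg(v)=r$ and $c$ is totally silver. You should flag this missing hypothesis rather than paper over it with a degree count that assumes what must be proved.
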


A construction similar to that of Corollary~\ref{cor:augment} may be derived easily from Corollary~\ref{cor:bipconst}


\section{Nontriviality conditions for cubic totally silver graphs}
\label{sec:triv}

If $G$ is a totally silver graph, then every connected component of $G$
is totally silver. 
On the other hand, by Theorem~\ref{thm:edgecol}, every cubic totally silver
graph is bridgeless.
Therefore, every cubic totally silver graph is $2$--connected.
We begin this section by proving that although a $2$--edge cut may be present
in a cubic totally silver graph, its existence leads to a trivial situation,
that is a reduction to smaller cubic totally silver graphs.

\begin{lem}
Every cubic totally silver graph with a $2$--edge cut is obtained from
a disjoint union of two cubic totally silver graphs via a colored $2$--switch.
\end{lem}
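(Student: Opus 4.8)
The plan is to take a cubic totally silver graph $G$ together with a totally silver
coloring $c$, fix a $2$--edge cut $\{e_1,e_2\}$ of $G$, and show that a single
colored $2$--switch on $e_1,e_2$ separates $G$ into two pieces, each of which is
again cubic totally silver. Since any colored $2$--switch preserves the property
of being totally silver (as observed before Theorem~\ref{thm:switch}), running
this switch in reverse exhibits $G$ as arising from the disjoint union of the two
pieces via a colored $2$--switch, which is exactly what we want.

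First I would analyze the structure around the cut. Write
$e_1=u_1v_1$ and $e_2=u_2v_2$, and let $G-e_1-e_2$ have components $A$ and $B$
with $u_1,u_2\in A$ and $v_1,v_2\in B$. (The $u_i$ are distinct, and the $v_i$
are distinct, since $G$ is cubic and connected: if $u_1=u_2$ that vertex would
have degree $1$ in $G-e_1-e_2$, forcing a further edge from it into $B$ and hence
the cut would not separate $A$ from $B$.) Every vertex of $A$ other than
$u_1,u_2$ has degree $3$ in $G[A]$, while $u_1,u_2$ have degree $2$; similarly
for $B$. So to turn $A$ and $B$ into cubic graphs I want to add one edge inside
$A$ joining $u_1$ to $u_2$, and one edge inside $B$ joining $v_1$ to $v_2$ —
provided $u_1u_2\notin E(G)$ and $v_1v_2\notin E(G)$, which I must check, and
provided the resulting coloring is still totally silver.

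The key step is the color bookkeeping. Consider the vertex $u_1$: in $G$ its
closed neighborhood $N[u_1]$ carries all four colors exactly once; removing the
edge $u_1v_1$ deletes the color $c(v_1)$ from the palette seen around $u_1$ in
$G[A]$. Likewise removing $e_2$ deletes $c(v_2)$ from around $u_2$. If we can
show $c(v_1)=c(u_2)$ and $c(v_2)=c(u_1)$, then adding the edge $u_1u_2$ inside
$A$ restores exactly the missing color at each endpoint and creates no
repetition, so $c$ restricted to $A$ becomes a totally silver coloring of the
cubic graph $A+u_1u_2$; symmetrically for $B$. To get these color equalities,
note that $\{e_1,e_2\}$ being a $2$--edge cut forces, by a parity/counting
argument on each color class (or directly: delete $e_1$, which merges the color
classes' structure, and use that $G-e_1$ still has all vertices but
$u_1,v_1$ fully colored) — more cleanly, I would argue that $c(u_1)\neq c(u_2)$
is impossible by looking at the $1$--regular bipartite-like structure between
color classes guaranteed by Corollary~\ref{cor:construction}: the subgraph on
$C_{c(u_1)}\cup C_{c(v_1)}$ is $1$--regular, and $e_1$ is an edge of it; since
removing $e_1$ and $e_2$ disconnects $G$, these two edges must lie in a common
component of that $1$--regular subgraph, i.e. they are the same perfect matching
restricted there, forcing $\{c(u_1),c(v_1)\}=\{c(u_2),c(v_2)\}$, and the pairing
$c(u_1)\neq c(u_2)$ (edges go between color classes) then gives
$c(u_1)=c(v_2)$ and $c(u_2)=c(v_1)$. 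This also yields $c(u_1)\neq c(u_2)$, so
$u_1u_2$ is a legal new edge (no loop, no parallel with an existing edge since a
monochromatic-pair edge $u_1u_2$ would already have been forbidden by the
$1$--regularity — I should double-check $u_1u_2\notin E(G)$ separately, but if it
were, then $\{u_1u_2,e_1,e_2\}$ structure quickly contradicts cubicity of $G$
around $u_1$).

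I expect the main obstacle to be precisely pinning down the color equalities
$c(u_1)=c(v_2)$, $c(u_2)=c(v_1)$ rigorously and ruling out the degenerate cases
($u_1=u_2$, or $u_1u_2$ already an edge, and their $B$--side analogues) — once
those are in hand, the verification that the two new graphs are cubic totally
silver and that $G$ is recovered by one colored $2$--switch is routine. A clean
way to organize the color argument is to first show that in a cubic totally
silver graph a $2$--edge cut cannot be \emph{monochromatic} in either coordinate
and cannot have $c(u_1)=c(u_2)$, using Corollary~\ref{cor:colorclasses} together
with the $1$--regularity of the subgraphs on pairs of color classes, and then the
four colors being distinct around $u_1$ pins down the rest.
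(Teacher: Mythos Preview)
Your overall strategy matches the paper's: establish the color equalities $c(u_2)=c(v_1)$ and $c(v_2)=c(u_1)$, deduce $u_1u_2\notin E(G)$, and then observe that the colored $2$--switch on $e_1,e_2$ replaces them by $u_1u_2$ and $v_1v_2$, splitting $G$ into two cubic totally silver graphs on $A$ and $B$. Where your proposal has a genuine gap is precisely the step you yourself flag as the main obstacle.

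Your attempted argument for the color equalities does not work as written. You say that $e_1$ and $e_2$ ``must lie in a common component of that $1$--regular subgraph'' on $C_{c(u_1)}\cup C_{c(v_1)}$. But a $1$--regular graph is a perfect matching: every edge is its own component, so this sentence carries no information and certainly does not force $e_2$ into the matching $M_{c(u_1),c(v_1)}$. What is needed is a counting (equivalently, parity) argument across the cut, and this is exactly what the paper does. With $c(u_1)=1$, $c(v_1)=2$, choose the label $4$ so that $c(u_2)\neq4$; then every color--$4$ vertex of $A$ has all three neighbours inside $A$, and double counting the edges of $G[A]$ joining color $4$ to color $i$ (for $i=1,2,3$) gives $|C_i\cap A|=|C_4\cap A|$ for all~$i$. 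Repeating the count with color $1$ in place of $4$ now \emph{fails} to balance unless the second cut edge also lies in the matching between $C_1$ and $C_2$ with its color--$2$ endpoint on the $A$ side, which forces $c(u_2)=2$ and $c(v_2)=1$. Your invocation of Corollary~\ref{cor:construction} is the right ingredient, but it has to be fed through this counting, not through a ``common component'' claim.

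Two smaller corrections. Your reason for $u_1\neq u_2$ is backwards (the third edge at such a vertex would land in $A$, not $B$); the clean argument, used in the paper, is that if $u_1=u_2$ then the third edge at that vertex is a bridge, and cubic totally silver graphs are bridgeless by Theorem~\ref{thm:edgecol}. And your claim that $u_1u_2\in E(G)$ would ``contradict cubicity'' is false; the actual obstruction, again as in the paper, is that once $c(u_2)=c(v_1)$ is known, an edge $u_1u_2$ would give $u_1$ two neighbours of the same color, violating the totally silver condition.
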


\begin{proof}
Let $(S,T)$ be an edge cut in $G$ with $\delta(S)=\{x y,x' y'\}$ and $x,x'\in S$.
Since $G$ is bridgeless, we have $x\not=x'$ and $y\not=y'$.
Let $c$ be a totally silver coloring of $G$ with $c(x)=1$ and $c(y)=2$,
and assume $c(x')\not=4$.
Let $n$ be the number of vertices $v\in S$ with $c(v)=4$.
For $i=1,2,3$, each $v\in S$ with $c(v)=4$ has exactly one neighbor $w$
with $c(w)=i$.
On the other hand, each $w\in S$ with $c(w)=i$ has exactly one neighbor
$v$ with $c(v)=4$.
Thus $|c^{-1}(i)\cap S|=n$. A similar counting argument with
the color $1$ in place of $4$ gives $c(x')=2$ and $c(y')=1$. 
Note that since $c(y)=c(x')$ and $x y$ is an edge, $xx'$ is not an edge.
Thus one may apply a colored $2$--switch on the edges $x y$ and $x' y'$
to obtain two cubic totally silver graphs on vertex sets $S$ and~$T$.
\end{proof}

By the above lemma, a nontrivial cubic totally silver graph is $3$--connected.
We proceed by giving similar reductions for cubic totally silver graphs
with girth less than~$6$.
Note that since the square of a $5$--cycle and the square of $K_{2,3}$
are both isomorphic to a $5$--clique, a cubic totally silver graph does
not contain $C_5$ or $K_{2,3}$ as a subgraph.

\begin{lem}
Let $G$ be a cubic totally silver graph which contains a $3$--cycle.
If $|V(G)|>4$, then $G$ can be reduced to a cubic totally silver graph
on $|V(G)|-4$ vertices.
\end{lem}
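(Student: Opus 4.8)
The plan is to exploit the rigidity that a triangle imposes on a totally silver coloring, and then to peel off a $K_4$ component by colored $2$--switches. Fix a totally silver coloring $c$ of $G$ with color set $\{1,2,3,4\}$ and let $abc$ be a triangle in $G$. Since $G$ is cubic, $a$, $b$, $c$ have third neighbors $a'$, $b'$, $c'$ respectively, where by definition $a'\notin\{b,c\}$, $b'\notin\{a,c\}$ and $c'\notin\{a,b\}$. Restricted to $N[a]=\{a,b,c,a'\}$ the coloring is a bijection onto $\{1,2,3,4\}$, and likewise for $N[b]$ and $N[c]$; each of these forces the corresponding third neighbor to carry the one color missing from $c(a),c(b),c(c)$. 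Renaming colors, we may take $c(a)=1$, $c(b)=2$, $c(c)=3$ and $c(a')=c(b')=c(c')=4$.

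If $a'=b'=c'$, then this common vertex is adjacent to each of $a,b,c$ and has no other neighbor, so $\{a,b,c,a'\}$ spans a connected component isomorphic to $K_4$; as $G$ is connected and $|V(G)|>4$, this cannot happen (in the disconnected reading one simply discards this $K_4$ component). Hence $a',b',c'$ are not all equal, and since among three objects that are not all equal at least one differs from the other two, we may relabel the triangle so that $a'\neq b'$ and $a'\neq c'$. The aim is now to modify $G$ by switches until $\{a,b,c,a'\}$ induces a $K_4$.

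The core of the argument consists of two colored $2$--switches. Let $p$ and $q$ be the neighbors of $a'$ with $c(p)=2$ and $c(q)=3$ (the remaining neighbor of $a'$ being $a$, of color $1$). First switch the edges $bb'$ and $a'p$, which is admissible since $c(b)=c(p)=2$ and $c(b')=c(a')=4$; this replaces them by $ba'$ and $b'p$. Then switch $cc'$ and $a'q$, admissible since $c(c)=c(q)=3$ and $c(c')=c(a')=4$; this replaces them by $ca'$ and $c'q$. In the resulting graph $a$ is adjacent to $b,c,a'$, vertex $b$ to $a,c,a'$, vertex $c$ to $a,b,a'$, and $a'$ to $a,b,c$, so $\{a,b,c,a'\}$ induces a $K_4$ all of whose edges stay inside the set, hence a connected component. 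Since a colored $2$--switch leaves the colors on every closed neighborhood unchanged, it preserves being cubic and totally silver; deleting this component therefore leaves a cubic totally silver graph $G'$ with $|V(G)|-4$ vertices, and $G$ reduces to $G'$.

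The main obstacle is verifying that the two switches are legitimate: that the four named edges exist, that the four vertices involved in each switch are distinct, that the two added edges are not already present (so no loop or multi-edge is created), and that the first switch leaves intact the edges $cc'$ and $a'q$ used by the second. Each of these is a one-line color count forced by the triangle: for instance $p\neq b$ and $q\neq c$ because neither $b$ nor $c$ is adjacent to $a'$ (as $a'\neq b'$ and $a'\neq c'$); the vertices $b',c',p,q$ all lie outside $\{a,b,c,a'\}$ by comparing colors; and $b'$ is not adjacent to $p$ while $c'$ is not adjacent to $q$, since the only color-$2$ neighbor of $b'$ is $b$ and the only color-$3$ neighbor of $c'$ is $c$. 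I expect these routine checks, rather than any conceptual difficulty, to occupy most of the written proof.
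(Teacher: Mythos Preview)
Your argument is correct. The two colored $2$--switches are legitimate: the colour constraints force $p\neq b$, $q\neq c$, $b'\neq a'$, $c'\neq a'$, and the ``new'' edges $ba'$, $b'p$, $ca'$, $c'q$ are absent beforehand, exactly as you indicate. One point worth making explicit is that when $b'=c'$ the first switch alters the neighbourhood of $c'$, but its unique colour--$3$ neighbour is still $c$ afterwards (the switch replaced a colour--$2$ neighbour by another colour--$2$ vertex), so $c'q$ is indeed a non-edge when the second switch is performed. With that checked, $\{a,b,c,a'\}$ becomes an isolated $K_4$ and deleting it gives the required reduction.

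The paper proceeds differently. Rather than invoking colored $2$--switches, it deletes four vertices outright and patches the resulting degree--$2$ vertices with one or two new edges, splitting into two cases according to whether two of the outside neighbours coincide. Your route uses the paper's own switch machinery to unify both cases after a single relabelling (choosing $a'$ to be the outside neighbour distinct from the other two), which is tidier conceptually; the price is the cluster of well--definedness checks you flag at the end. Note also that the two proofs can produce genuinely different reduced graphs: when $b'=c'$, the paper removes $\{a,b,c,b'\}$ and adds a single edge, whereas you remove $\{a,b,c,a'\}$ and reroute two edges through $b'=c'$. Both are valid cubic totally silver graphs on $|V(G)|-4$ vertices, so either establishes the lemma.
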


\begin{proof}
Let $G$ be a connected cubic totally silver graph and let $c$ be a totally silver
coloring of~$G$. Let $T=\{x_1,x_2,x_3\}$ induce a triangle in~$G$.
We may assume that $c(x_i)=i$ for $i=1,2,3$.
If there is $y\not\in T$ with two neighbors in~$T$,
we construct a graph $G'$ by removing $x_1,x_2,x_3,y$ and all their edges from $G$
and then connecting the two resulting degree $2$ vertices by an edge.
Then the restriction of $c$ to $V(G')$ is a totally silver coloring of~$G'$.
Note that the degree $2$ vertices in this construction are distinct since
otherwise $G$ has a $5$--cycle.
Suppose that each  $x_i$ has a distinct neighbor $y_i$ outside of~$T$.
Let $z_1$ and $z_2$ be the two neighbors of $y_3$ other than $x_3$.
Then $z_1,z_2\not\in T$, $c(y_1)=c(y_2)=c(y_3)=4$ and $\{c(z_1),c(z_2)\}=\{1,2\}$.
By possibly renaming the vertices $z_1$ and $z_2$, we may assume that
$c(z_1)=1$ and $c(z_2)=2$. Let $G''$ be obtained from $G$ by deleting
the vertices $x_1,x_2,x_3,y_3$ and all their edges, and adding the edges
$y_1z_1$ and $y_2z_2$.
The the restriction of $c$ to $V(G'')$ is a totally silver coloring of~$G''$.
\end{proof}

\begin{lem}
Let $G$ be a triangle-free cubic totally silver graph which contains a $4$--cycle.
If $|V(G)|>4$, then $G$ can be reduced to a cubic totally silver graph
on $|V(G)|-4$ vertices.
\label{lem:reduce4cycle}
\end{lem}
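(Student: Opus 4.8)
The plan is to excise the $4$-cycle together with the four ``pendant'' vertices attached to it, and to reconnect the four vertices that thereby lose a neighbor in the unique pairing that keeps the coloring totally silver.

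Let $C=x_1x_2x_3x_4$ be a $4$-cycle in $G$. Since $G$ is triangle-free, $C$ has no chord; in particular each $x_i$ has exactly two neighbors on $C$ and hence a unique neighbor $y_i\notin V(C)$. Fix a totally silver coloring $c$ of $G$, necessarily with color set $\{1,2,3,4\}$. The first step is to determine $c$ on $C$: using that $x_2,x_4\in N[x_1]$, that $x_1,x_3\in N[x_2]$, and that consecutive $x_i$'s are adjacent, one sees that $c(x_1),c(x_2),c(x_3),c(x_4)$ are four distinct colors, so after relabeling colors I may take $c(x_i)=i$. Now applying the closed-neighborhood condition at $x_1,x_2,x_3,x_4$ in turn forces $c(y_1)=3$, $c(y_2)=4$, $c(y_3)=1$, $c(y_4)=2$. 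In particular the four colors $c(y_1),\dots,c(y_4)$ are distinct, so $y_1,y_2,y_3,y_4$ are distinct vertices; and since an edge $y_ix_j$ with $j\neq i$ would produce either a triangle (when $x_ix_j\in E(C)$) or a monochromatic edge (when not), each $y_i$ meets $V(C)$ only in $x_i$.

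Next comes the reduction. Let $G''$ be obtained from $G$ by deleting $x_1,x_2,x_3,x_4$ and all edges incident to them, and then adding the two edges $y_1y_3$ and $y_2y_4$. The step I expect to be the main obstacle is verifying that $G''$ is a simple graph. There are no loops because $y_1\neq y_3$ and $y_2\neq y_4$. There are no multiple edges because $y_1y_3\notin E(G)$: if it were an edge, then $x_1x_2x_3y_3y_1$ would be a $5$-cycle in $G$ (its vertices are distinct since $y_1,y_3\notin V(C)$ and $y_1\neq y_3$), contradicting the observation, recorded just before the lemma, that a cubic totally silver graph contains no $C_5$; and symmetrically $y_2y_4\notin E(G)$ via the $5$-cycle $x_2x_3x_4y_4y_2$. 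Granting this, the only vertices whose degree changes in passing from $G$ to $G''$ are $y_1,y_2,y_3,y_4$, and each of them loses exactly one edge (the one to its $x_i$) and gains exactly one; hence $G''$ is cubic on $|V(G)|-4$ vertices.

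Finally I would check that the restriction of $c$ to $V(G'')$ is a totally silver coloring of $G''$. For a vertex $v\notin\{y_1,y_2,y_3,y_4\}$, no neighbor of $v$ lies in $V(C)$, so $N_{G''}[v]=N_G[v]$ and there is nothing to check. For $v=y_i$, going from $G$ to $G''$ replaces the neighbor $x_i$ of $y_i$ by the vertex $y_j$ at the other end of the new edge at $y_i$, and one checks case by case that $c(x_i)=c(y_j)$ (e.g. $c(x_1)=1=c(y_3)$, $c(x_3)=3=c(y_1)$, and likewise for $y_2,y_4$); hence the multiset of colors on $N[y_i]$ is unchanged, so it is still exactly $\{1,2,3,4\}$. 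Therefore $G''$ is a cubic totally silver graph on $|V(G)|-4$ vertices, as required. I expect the only real work to be the color bookkeeping of the first step and the $C_5$-exclusion of the second; the remaining verifications are routine.
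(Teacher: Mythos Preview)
Your argument is correct and follows the paper's proof essentially verbatim: delete $x_1,\dots,x_4$, add $y_1y_3$ and $y_2y_4$, and keep the same coloring; you in fact supply more detail than the paper (distinctness of the $y_i$ via their colors, simplicity of $G''$ via the $C_5$-exclusion, and the closed-neighborhood check). Only your opening sentence is off---you say you will excise the $4$-cycle ``together with the four pendant vertices,'' which would be eight vertices, but the construction you actually carry out (and the paper's) removes only the four cycle vertices.
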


\begin{proof}
Let $G$ be a connected triangle-free cubic totally silver graph and let
$c$ be a totally silver coloring of~$G$.
Let $C=x_1x_2x_3x_4x_1$ be a $4$--cycle in~$G$.
Each $x_i$ has a distinct neighbor $y_i\not\in C$ since $G$ is triangle-
and $K_{2,3}$--free.
By possibly renaming the colors, we may assume that $c(x_i)=i$ for $i=1,2,3,4$.
Then $c(y_1)=3$, $c(y_2)=4$, $c(y_3)=1$, and $c(y_4)2$.
Let $G'$ be obtained from $G$ by deleting the vertices $x_1,x_2,x_3,x_4$ and
all their edges, then adding the edges $y_1y_3$ and $y_2y_4$.
Then the restriction of $c$ to $V(G')$ is a totally silver coloring of~$G'$.
\end{proof}

In light of the reductions of this section, we consider a cubic totally silver
graph {\em nontrivial}, if it is $3$--connected and it has girth at least~$6$.


\section{Constructions of cubic totally silver graphs}
\label{sec:eg}

In this section we present several infinite families of nontrivial cubic totally silver
graphs. In particular, we give nontrivial cubic totally silver graphs of any
order which is a multiple of~$4$. Note that the $(3,6)$--cage (the smallest cubic
graph with girth at least $6$) has order~$14$. Therefore, no nontrivial cubic
totally silver graphs of orders $4$, $8$, or $12$ exist.

As our first example, we consider the family of
generalized Petersen graphs $P(n,d)$ where $n$ and $d$ are positive integers with
$n\ge2d+1$. The graph $P(n,d)$ consists of an $n$--cycle $x_1x_2\cdots x_n x_1$,
along with $n$ other vertices $y_1,y_2,\ldots,y_n$ where each $y_i$ is adjacent to
$x_i$, $y_{i-d}$ and $y_{i+d}$ (all subscripts are reduced modulo $n$). In
particular, $P(4,1)$ is the hypercube graph $Q_3$, $P(5,2)$ is the Petersen graph,
and $P(n,1)$ is the prism $C_n\boxempty K_2$, where $\boxempty$ denotes the Cartesian
product.
The following is the main result of~\cite{EbrJahMah}.

\begin{thm}{\em\cite{EbrJahMah}}
The generalized Petersen graph $P(n,d)$ is totally silver
if and only if $4|n$ and $d$ is odd.
\end{thm}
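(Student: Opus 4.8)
I would prove the two implications separately: sufficiency by an explicit colouring, necessity by recasting the silver condition as a $\mathbb{Z}_2^2$-linear recurrence with local distinctness constraints.

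For \emph{sufficiency}, assume $4\mid n$ and $d$ odd, identify the colours with $\mathbb{Z}_4$, and set $c(x_i)=i\bmod 4$ and $c(y_i)=(i+2)\bmod 4$; this is consistent since $4\mid n$. On $N[x_i]=\{x_{i-1},x_i,x_{i+1},y_i\}$ the colours are $i-1,i,i+1,i+2$, i.e.\ all of $\mathbb{Z}_4$. On $N[y_i]=\{y_i,y_{i-d},y_{i+d},x_i\}$ they are $i+2,\ i+2-d,\ i+2+d,\ i$; since $d$ is odd, the shifts $0,-d,d$ are pairwise distinct modulo $4$ and none equals $2$, so $\{i+2,\ i+2-d,\ i+2+d\}\equiv\{i+1,i+2,i+3\}\pmod 4$, and together with $c(x_i)=i$ this gives all of $\mathbb{Z}_4$. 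Hence $c$ is totally silver; I expect no difficulty here.

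For \emph{necessity}, let $c$ be a totally silver colouring, identify the colour set with $\mathbb{Z}_2^2$, and put $a_i=c(x_i)$, $b_i=c(y_i)$. By Corollary~\ref{cor:colorclasses}, $4\mid |V(P(n,d))|=2n$, so $n$ is even. Since the four elements of $\mathbb{Z}_2^2$ sum to $0$, ``$N[v]$ carries all four colours'' is equivalent to ``the colours on $N[v]$ sum to $0$ and are pairwise distinct''; applied to $x_i$ and $y_i$ this says precisely that, for every $i$,
\[
b_i=a_{i-1}+a_i+a_{i+1},\qquad a_i=b_{i-d}+b_i+b_{i+d},
\]
with $a_{i-1},a_i,a_{i+1}$ pairwise distinct and $b_{i-d},b_i,b_{i+d}$ pairwise distinct (conversely any such $a$ yields a totally silver colouring). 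Thus $b$ is determined by $a$, and substituting the first identity into the second eliminates $b$ and leaves the recurrence
\[
a_{i-d-1}+a_{i-d}+a_{i-d+1}+a_{i-1}+a_{i+1}+a_{i+d-1}+a_{i+d}+a_{i+d+1}=0\qquad(i\in\mathbb{Z}_n).
\]
Call $i$ a \emph{defect} if $a_{i-1}=a_{i+2}$. Since $\{a_{i-1},b_i\}$ and $\{a_{i+2},b_{i+1}\}$ both equal the complement of $\{a_i,a_{i+1}\}$ in $\mathbb{Z}_2^2$, a defect at $i$ is the same as $b_i=b_{i+1}$, while its absence means $b_i=a_{i+2}$, i.e.\ $a_{i-1}+a_i+a_{i+1}+a_{i+2}=0$. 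The key claim is that $a$ has no defect. Granting this, $a_{i-1}+a_i+a_{i+1}+a_{i+2}=0$ for all $i$, hence $a_{i+4}=a_i$; with the pairwise distinctness of $a_{i-1},a_i,a_{i+1}$ this makes $a$ periodic of period exactly $4$, forcing $4\mid n$. Relabelling colours we may take $a_i\equiv i\pmod 4$, so $b_i=a_{i+2}\equiv i+2\pmod 4$, and the distinctness of $b_{i-d},b_i,b_{i+d}$ then forces $i-d,i,i+d$ to be distinct modulo $4$, i.e.\ $2d\not\equiv 0\pmod 4$, i.e.\ $d$ odd.

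The no-defect claim is where I expect the real work. For $d=1$ it is immediate, since $y_iy_{i+1}$ is then an edge, so $b_i\ne b_{i+1}$. For general $d$ an isolated defect violates no single closed-neighbourhood condition, so a global argument is needed: a defect at $i$ forces $b_i=b_{i+1}$, and I would feed this through the linear recurrence and the inner distinctness conditions (which say every triple $b_{j-d},b_j,b_{j+d}$ is pairwise distinct) to propagate equalities among the $b_j$ until some such triple is no longer pairwise distinct, contradicting the second displayed condition. Making this uniform should require a short case analysis according to $\gcd(d,n)$ and to which of the shifts $\pm1,\pm(d-1),\pm d,\pm(d+1)$ coincide modulo $n$ --- which is exactly where the parity of $d$ and the residue $n\bmod 4$ re-enter --- and this bookkeeping is the technical core. (One could instead argue via the equivalent description of cubic totally silver graphs as those whose vertex set partitions into four perfect codes, but I would expect the same combinatorial difficulty to resurface.)
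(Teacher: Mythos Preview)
The paper does not actually prove this theorem: it is quoted from~\cite{EbrJahMah} with no argument given, so there is no proof in the present paper to compare yours against.

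On the merits of your proposal: the sufficiency direction is correct and complete. For necessity, your reduction to the ``no-defect'' claim is sound, and the deduction of $4\mid n$ and $d$ odd from that claim is clean. But the no-defect claim itself is, as you concede, unproved, and your sketched propagation is not obviously going to close. A single equality $b_i=b_{i+1}$ does not, through the inner distinctness constraints alone, force any further equality among the $b_j$: those constraints only say that each triple $b_{j-d},b_j,b_{j+d}$ is rainbow, which is perfectly compatible with $b_i=b_{i+1}$ when $d>1$. The linear recurrence you derived is a relation among the $a_j$, not directly among the $b_j$, and feeding a defect back through it does not immediately produce a second defect or a violated inner triple. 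So the ``propagate until contradiction'' plan needs a concrete mechanism, and whether it terminates uniformly in $n$ and $d$ is precisely the content of the theorem; this is a genuine gap, not routine bookkeeping. The reference~\cite{EbrJahMah} treats the problem via efficient domination (perfect $1$--codes) in $P(n,d)$, and there too the necessity direction is the substantive part; your $\mathbb{Z}_2^2$ linearisation is an attractive reformulation, but it does not sidestep the hard step.
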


The above theorem gives examples of cubic totally silver graphs whose order is divisible
by~$8$. While these graphs are all $3$--connected and bipartite (thus triangle-free),
not all of them are $C_4$--free. 
The graphs $P(2d+2,d)$ where $d\ge3$ is odd, are all nontrivial cubic totally silver graphs.

\begin{thm}
For all $n\ge4$, there exists a nontrivial cubic totally silver graph of order~$4n$.
\end{thm}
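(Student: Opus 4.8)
The plan is to treat the cases $n$ even and $n$ odd separately. For even $n\ge4$ I would take the generalized Petersen graph $P(2n,n-1)$: since $2n\ge 2(n-1)+1$ it is defined, it has $4n$ vertices, it is cubic, $4\mid 2n$, and $n-1$ is odd, so by the theorem of~\cite{EbrJahMah} quoted above it is totally silver. Writing $d=n-1$, we have $d\ge3$ odd and $P(2n,n-1)=P(2d+2,d)$, which is one of the graphs already observed to be nontrivial, that is, $3$--connected and of girth at least~$6$. So the even case is immediate, and the real work is the odd case, where no generalized Petersen graph has order~$4n$.

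For odd $n\ge5$ I would construct an explicit graph $G_n$ directly from Corollary~\ref{cor:construction}. Let $V(G_n)=\Z_n\times\{0,1,2,3\}$, let the intended color classes be $C_s=\Z_n\times\{s\}$, and for $s<t$ join $C_s$ to $C_t$ by the perfect matching $\{(i,s)(i+f_{st},t)\st i\in\Z_n\}$, where $(f_{01},f_{02},f_{03},f_{12},f_{13},f_{23})=(0,0,0,1,2,3)$ and $f_{ts}=-f_{st}$. Each $C_s$ is independent and each $C_s\cup C_t$ induces a $1$--regular graph, so by Corollary~\ref{cor:construction} the graph $G_n$ is totally silver; it is cubic with $|V(G_n)|=4n$. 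For the girth, a cycle of length at most $5$ in a graph of this shape must traverse a sequence of color classes in which no class is repeated (a repeated class forces the walk to retrace a matching edge), so no $5$--cycle exists since there are only four classes; and $3$-- and $4$--cycles correspond exactly to the vanishing modulo $n$ of the four ``triangle sums'' $f_{st}+f_{tu}+f_{us}$ over the triples $\{s,t,u\}$ and the three ``quadrilateral sums'' $f_{st}+f_{tu}+f_{uv}+f_{vs}$ over the cyclic orders of $\{0,1,2,3\}$. With the chosen values these seven quantities are $1,2,3,2$ and $4,n-1,1$, all nonzero in $\Z_n$ for $n\ge5$, so $G_n$ has girth at least~$6$.

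The remaining point, $3$--connectivity of $G_n$, is the step I expect to be the main obstacle. Because $f_{01}=f_{02}=f_{03}=0$, each vertex $(i,s)$ with $s\in\{1,2,3\}$ is adjacent to $(i,0)$, and $G_n-C_0$ is $2$--regular; tracing one component and using that $n$ is odd shows $G_n-C_0$ is a single $3n$--cycle $Z$, so $G_n$ is $Z$ together with $n$ ``hub'' vertices, each joined to three vertices of $Z$. Deleting one or two hub-incident edges cannot disconnect $G_n$ (one checks the few cases directly), so any $2$--edge-cut would consist of two edges of $Z$; these split $Z$ into arcs $X$ and $Y$, and for a genuine cut every hub must have all three of its $Z$--neighbors in $X$, or all three in $Y$, whence each of $X$ and $Y$ is a nonempty union of hub-neighborhoods. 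The crux is to show that each hub-neighborhood is spread around $Z$ so that the shortest arc of $Z$ containing it has length greater than $3n/2$; granting this, $X$ and $Y$ cannot both contain a hub-neighborhood, a contradiction, so $G_n$ is $3$--edge-connected and, being cubic, $3$--connected. In practice, after using the $\Z_n$--rotation symmetry one is reduced to placing the three positions of a single hub-neighborhood on $Z$---they come out as $0,\ 3(n-1)/2+1,\ 3n-4$ in $\Z_{3n}$---and checking that their largest cyclic gap equals $(3n-1)/2$, which is the one genuine computation. With girth at least $6$ and $3$--connectivity established, $G_n$ is a nontrivial cubic totally silver graph of order $4n$ for every odd $n\ge5$, and together with the even case this proves the theorem.
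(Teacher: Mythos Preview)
Your proof is correct, but it follows a genuinely different route from the paper's. The paper splits according to whether $3\mid n$ and builds two explicit families: for $3\mid n$ the graph $E_n$ (three disjoint $n$--cycles plus $n$ hubs, hub $z_i$ joined to the $i$th vertex of each cycle), and for $3\nmid n$ the graph $M_n$ (a single $3n$--cycle plus $n$ hubs, hub $z_i$ joined to $u_i,u_{n+i},u_{2n+i}$); in each case the totally silver coloring puts all hubs in one class and uses residues mod~$3$ on the cycle(s), while $3$--connectivity and girth $\ge6$ are asserted as straightforward. Your split is by parity: for even $n$ you simply invoke the generalized Petersen graphs $P(2d+2,d)$ with $d=n-1$ odd, already certified nontrivial earlier in the paper, which dispatches half the cases with no new work; for odd $n$ your $G_n$ is again a ``$3n$--cycle plus $n$ hubs'' graph, but with hub--neighbourhood gaps $(3n-1)/2,\ (3n-7)/2,\ 4$ rather than the paper's symmetric $n,n,n$, so $G_n\not\cong M_n$ in general. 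The paper's constructions are more symmetric and cover all $n\ge4$ uniformly, making the ``straightforward'' claims easier to believe; your approach trades this symmetry for economy in the even case and, notably, actually supplies a detailed $3$--connectivity argument (the arc--length count on $Z$), which the paper omits. Both are valid proofs of the theorem.
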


\begin{proof}
We give two families $E_n$ and $M_n$ of cubic graphs of order~$4n$.
The graph $E_n$ is totally silver when $3|n$ and $M_n$ is totally
silver when $3\not|n$.
Given $n\ge3$, the graph $E_n$ consists of three disjoint $n$--cycles
$u_1u_2\cdots u_nu_1$, $v_1v_2\cdots v_n v_1$, and $w_1w_2\cdots w_n w_1$,
and $n$ vertices $z_1,z_2,\ldots,z_n$, where $z_i$ is joined to $u_i$, $v_i$
and $w_i$ for each $i=1,2,\ldots,n$.
The graph $E_6$ is illustrated in Figure~\ref{fig:EM}.
It is straightforward to see that $E_n$ is $3$--connected, and that for $n\ge6$,
$E_n$ has no $3$--, $4$--, or $5$--cycles.
If $3|n$, let $c:V(E_n)\to\{0,1,2,3\}$ be defined by
$c(u_i)=i\mod3$, $c(v_i)=i+1\mod3$, $c(w_i)=i+2\mod3$, and $c(z_i)=3$.
Then $c$ is a totally silver coloring of~$E_n$.

Given $n\ge3$, the graph $M_n$ consists of a $3n$--cycle $u_1u_2\cdots u_{3n}u_1$,
and $n$ vertices $z_1,z_2,\ldots,z_n$,
and edges $z_i u_i$, $z_i u_{n+i}$, and $z_i u_{2n+i}$ for all $i=1,2,\ldots,n$.
The graph $M_4$ is illustrated in Figure~\ref{fig:EM}
It is straightforward to check that $M_n$ is $3$--connected and that for $n\ge4$,
$M_n$ has girth~$6$. If $3\not|n$, we define $c:V(M_n)\to\{0,1,2,3\}$ by
$c(u_i)=i\mod3$ and $c(z_i)=3$. It is easy to verify that $c$ is a totally silver
coloring.
\end{proof}

\begin{figure}[ht]
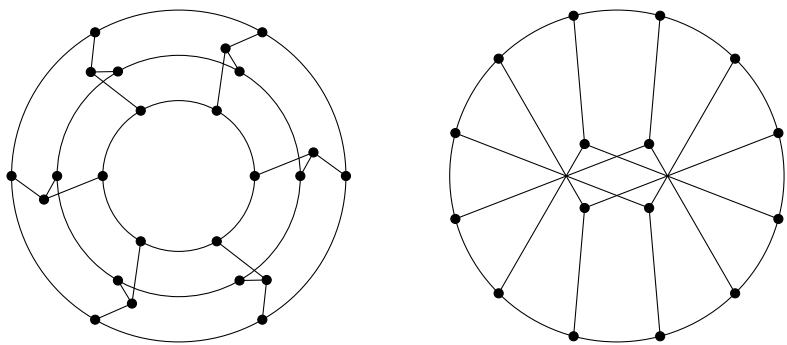

\begin{center}
\begin{tabular}{c@{\hspace{3em}}c}
\includegraphics{tsfig-3.mps}
&
\includegraphics{tsfig-4.mps}
\end{tabular}
\end{center}
\caption{The graphs $E_6$ (left) and $M_4$ (right).}
\label{fig:EM}
\end{figure}

\begin{rem}
Indeed the graph $E_n$ is totally silver if and only if $3|n$. This is since it is
easily observed that in every totally silver coloring of $E_n$, all the vertices
$z_n$ receive the same color. This implies that each of the remaining $n$--cycles
is colored by three colors. On the other hand, a $3$--coloring of $C_n\2$ exists
if and only if $3|n$.
A brute-force computer search confirms that $\chi(E_5\2)=6$. 
For a $5$--coloring of $E_n\2$ where $n>5$, one
may take a $4$--coloring of $C_n\2$ for the cycle on the $u_i$, the same coloring
rotated one position for the cycle on the $v_i$, and rotated two positions for
the cycle on the $w_i$. A fifth color is used for the~$z_i$.
\end{rem}

\begin{rem}
Similarly, it is easily seen that in every totally silver coloring
of $M_n$, all $z_i$ receive the same color, thus only three colors are available
for the remaining $3n$--cycle. On the other hand, the only $3$--coloring of the
square of a $3n$--cycle has color classes which contain every third vertex on the
cycle. If $3|n$, this coloring is not valid as a total silver coloring of $M_n$.
This is since if $3|n$, then $c(x_1)=c(x_n)$, while the vertices $x_1$ and $x_n$
have a common neighbor~$z_1$.
A brute-force computer search confirms that $\chi(M_3\2)=6$. For a $5$--coloring
of $M_n\2$ when $n>3$, one may use four colors for the $3n$--cycle on the $x_i$ and
a fifth color for the $z_i$. We omit a description of the coloring of the cycle
since it involves several cases.
\end{rem}

For $n\ge 4$, the graph $L_{2n}$ consists of a $2n$--cycle $x_1\cdots x_{2n}$
and the edges $x_ix_{i+5}$ for all odd $i$ (subscripts are reduced modulo $2n$). 
In particular, $L_8$ is isomorphic to the hypercube $Q_3$ and $L_{14}$ is isomorphic
to the Heawood graph.
The graph $L_{24}$ is illustrated in Figure~\ref{fig:L24}.
The graph $L_{2n}$ is bipartite for all $n\ge4$, since every edge joins an
odd-numbered vertex to an even-numbered vertex. Thus by Corollary~\ref{cor:2r+2},
$L_{2n}$ is not totally silver when $4\not|n$.

On the other hand, $L_{2n}$ is totally silver when $4|n$.
A totally silver coloring of $L_{2n}$ in this case is more easily presented
using an alternate representation of this graph.
For $m\ge1$, we construct a graph $L'_m$ on $8m$ vertices as follows.
We begin by a $6m$--cycle $y_1y_2\cdots y_{6m}y_1$, and for each $1\le i\le6m$
with $i=1,2\mod 6$, we join a new vertex to the vertices $y_i$, $y_{i+4}$, and $y_{i+8}$
(all subscripts are reduced modulo $6n$). We refer to the $2m$ new vertices added
in this last step as $z_1,z_2,\ldots,z_{2m}$. 
A totally silver coloring of $L'_m$ is the map $c:V(L'_m)\to\{0,1,2,3\}$
defined with $c(y_i)=i\mod3$ and $c(z_i)=3$. It remains to prove that
$L'_m$ is isomorphic to $L_{8m}$. We omit a formal proof of this result,
and instead, we present a graphical proof in Figure~\ref{fig:L24},
where in the normal drawing of the graph $L_{24}$ (left) we highlight an
$18$--cycle corresponding to the main cycle of the graph $L'_3$ (right).
The clear pattern visible in this figure may be used to give an
isomorphism between the graphs $L'_m$ and $L_{8m}$.

\begin{figure}[htb]
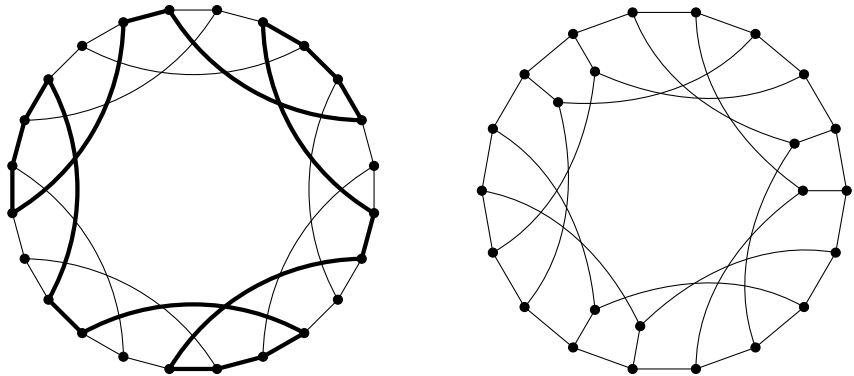

\begin{center}
\begin{tabular}{c@{\hspace{3em}}c}
\includegraphics{tsfig-2.mps}
&
\includegraphics{tsfig-201.mps}
\end{tabular}
\end{center}
\caption{Two drawings of the graph $L_{24}$.}
\label{fig:L24}
\end{figure}

For a positive integer $n\ge 2$, the {\em M\"obius ladder} $V_{2n}$ consists of a cycle
$C=v_1v_2\cdots v_{2n}v_1$ and the chords $v_iv_{n+i}$ for all $i=1,2,\ldots,n$.
Note that $V_4$ is isomorphic to $K_4$ and $V_6$ is isomorphic to $K_{3,3}$.
The graph $V_{2n}$ contains a $4$--cycle $v_1v_2v_{n+2}v_{n+1}v_1$.
Using the reduction of Lemma~\ref{lem:reduce4cycle}, and the fact that $V_4$ is
totally silver, one can see that $V_{2n}$ is totally silver if and only if
$n=2\mod4$. We may use M\"obius ladders in construction of nontrivial cubic totally
silver graphs as follows.
Let $n\ge2$ and let $H$ be obtained from $V_{2n}$ by subdividing all edges of~$C$.
We construct a graph $D_{n}$ from the disjoint union of two copies of $H$
by adding edges between pairs of corresponding vertices of degree $2$ in the
two copies of~$H$. The graph $D_{3}$ is illustrated in Figure~\ref{fig:D3}.

\begin{figure}[htb]
\begin{center}
\includegraphics{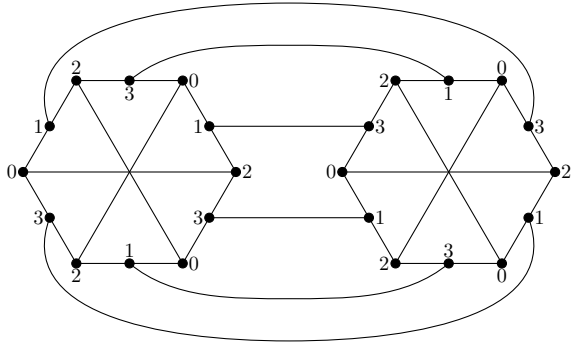}
\end{center}
\caption{The graph $D_3$ and a totally silver coloring of it.}
\label{fig:D3}
\end{figure}

Let $x_1x_2\cdots x_{4n}x_1$ and $y_1y_2\cdots y_{4n}y_1$ be the cycles
in $D_n$ obtained by subdividing the main $2n$--cycle of each copy of
$V_{2n}$ used in its construction. We may shift the labels so that
the edges $x_i y_i$ are present in $D_n$ for all even $1\le i\le4n$.
If $n$ is odd, $c:V(D_n)\to\{0,1,2,3\}$ defined by
$c(x_i) = i\mod 4$ and $c(y_i)=i+2\mod 4$ is a totally silver coloring.
A totally silver coloring of $D_3$ is presented in Figure~\ref{fig:D3}.
Indeed the graph $D_n$ is not totally silver if $n$ is even. We omit the
proof of this assertion.


\section{Totally silver graphs with large girth}
\label{sec:gir}

By way of Corollary~\ref{cor:augment}, any cubic totally silver graph $G$ is obtained from a $2$--regular totally silver graph (a disjoint union of cycles if lengths divisible by~$3$).
Here we give a construction of a family of cubic totally silver graphs with girth $9$ starting from a $3n$--cycle $C=x_1x_2\cdots x_{3n}x_1$ and $n$ isolated vertices $z_1,\ldots,z_n$.
Suppose that $X=\{x_1,x_2,\ldots,x_{3n}\}$ is partitioned into $X=X_1\cup\ldots\cup X_n$, where $|X_i|=3$ for all $i$, and no $X_i$ contains two vertices $x_j$ and $x_k$ where $j$ and $k$ are congruent modulo~$3$. Let $G$ be the cubic graph constructed from the  $3n$--cycle $C$ and the vertices $z_1,\ldots,z_n$, by joining $z_i$ to $X_i$, for all $i=1,2,\ldots,n$. Then $G$ is totally silver. The challenge here is to partition $X$ in such a way that the resulting cubic totally silver graph $G$ contains no short cycles. In the following we show that girth $9$ can be achieved when $n\ge15$, by taking $X_i=\{x_{3i},x_{3i+7},x_{3i+20}\}$ for all $i=1,2,\ldots,n$.

\begin{thm}
For every $n\ge15$, there exists a cubic totally silver graph of order $4n$ and girth~$9$.
 \label{thm:girth9}
\end{thm}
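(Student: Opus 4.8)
The plan is to verify first that the prescribed sets $X_i=\{x_{3i},x_{3i+7},x_{3i+20}\}$ (subscripts modulo $3n$) really do form a partition of $X$ of the kind used in the construction described just before the theorem: the three subscripts in each $X_i$ are congruent to $0,1,2$ modulo~$3$, and as $i$ runs over $\{1,\dots,n\}$ the three ``columns'' $\{3i\}_i$, $\{3i+7\}_i$, $\{3i+20\}_i$ exhaust the residue-$0$, residue-$1$ and residue-$2$ classes of $\Z_{3n}$ respectively, each exactly once. Hence $G$ is a cubic totally silver graph with $|V(G)|=3n+n=4n$, and everything reduces to the girth claim. The upper bound is immediate: since $z_i$ is joined to $x_{3i},x_{3i+7}\in X_i$, the walk $z_i,x_{3i},x_{3i+1},\dots,x_{3i+7},z_i$ is a $9$-cycle, so the girth is at most~$9$.

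For the lower bound I would classify an arbitrary cycle $Z$ in $G$ by the number $t$ of $z$-vertices it contains. Each $x$-vertex has only one $z$-neighbour in $G$, so every $x$-vertex of $Z$ lies on at least one edge of the $3n$-cycle $C$; therefore $Z\cap C$ is a disjoint union of $t$ nontrivial sub-paths (``arcs'') of $C$, glued into one cyclic sequence through the $t$ $z$-vertices. If $t=0$ then $Z=C$ has length $3n\ge45$. If $t=1$, deleting the $z$-vertex from $Z$ leaves a sub-path of $C$ joining two vertices of one triple $X_i$, and the pairwise $C$-distances inside $X_i$ are $7,13,20$, so $|Z|\ge2+7=9$. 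If $t\ge3$, then $|Z|$ is the sum of the $t$ arc-lengths (each $\ge1$, since consecutive $z$-vertices have disjoint triples) plus $2t$, so $|Z|\ge3t\ge9$. In all these cases there is no cycle of length $\le8$.

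The decisive case is $t=2$. Then $Z=z_i\,A_1\,z_j\,A_2\,z_i$ with arcs $A_1,A_2$ of lengths $p_1,p_2\ge1$, each arc being a path in the cycle $C$ and hence monotone, so $|Z|=p_1+p_2+4$; assume $|Z|\le8$, i.e.\ $p_1+p_2\le4$. Reading off the subscript of the current $x$-vertex as one goes once around $Z$ gives
\[\epsilon_1p_1+\epsilon_2p_2+\mu_1+\mu_2\equiv0\pmod{3n},\]
where $\epsilon_1,\epsilon_2\in\{\pm1\}$ are the orientations of $A_1,A_2$ and $\mu_1,\mu_2$ are the subscript-jumps across $z_j$ and $z_i$; since each jump is a difference of two offsets from $\{0,7,20\}$, we have $\mu_1,\mu_2\in\{\pm7,\pm13,\pm20\}$. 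Because $n\ge15$, the integer on the left has absolute value at most $4+40=44<45\le3n$, so it is in fact $0$; as no sum of two elements of $\{\pm7,\pm13,\pm20\}$ has absolute value in $\{1,2,3,4\}$, we get $\mu_1+\mu_2=0$ and hence $\epsilon_1p_1+\epsilon_2p_2=0$, forcing $p_1=p_2\in\{1,2\}$. Finally $\mu_1=-\mu_2$, together with the fact that the six ordered differences of distinct offsets from $\{0,7,20\}$ are pairwise distinct, forces $A_1$ and $A_2$ to begin and end at the \emph{same} offset inside their respective triples; reducing the position equation for $A_1$ alone modulo~$3$ then yields $\epsilon_1p_1\equiv0\pmod3$, which is impossible for $p_1\in\{1,2\}$. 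This contradiction disposes of $t=2$ as well, so $G$ has girth exactly~$9$.

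The main obstacle is, unsurprisingly, the $t=2$ analysis: one must set up the ``subscript modulo $3n$'' bookkeeping carefully and then observe that the numbers $7,13,20$ are chosen precisely so that (i) their pairwise sums avoid $\{\pm1,\dots,\pm4\}$, (ii) the ordered differences of $\{0,7,20\}$ are distinct, and (iii) $\pm1,\pm2$ are not divisible by~$3$ while $3(j-i)$ is. The hypothesis $n\ge15$ enters only to guarantee $3n\ge45$, which is exactly what promotes the congruence $\equiv0\pmod{3n}$ to an honest equality $=0$; I expect no difficulty elsewhere.
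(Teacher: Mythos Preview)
Your argument is correct. It differs substantially from the paper's proof, which proceeds by brute force: invoking the cyclic symmetry of the construction, the paper reduces to checking that none of the four representative vertices $x_0,x_1,x_{3n-1},z_0$ lies on a cycle of length $\le8$, and does this by drawing (literally, in a figure) the ball of radius~$4$ around~$x_0$ and asserting that the analogous pictures for the other three vertices are similar.

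Your route is analytic: you classify a putative short cycle by the number $t$ of $z$-vertices it contains, dispose of $t=0,1$ and $t\ge3$ by easy length estimates, and handle the critical case $t=2$ via the subscript congruence $\epsilon_1p_1+\epsilon_2p_2+\mu_1+\mu_2\equiv0\pmod{3n}$. The key observations---that the sums $\mu_1+\mu_2$ avoid $\{\pm1,\dots,\pm4\}$, that the six ordered differences of $\{0,7,20\}$ are distinct (so $\mu_1=-\mu_2$ forces $a_1=b_1$ and $a_2=b_2$), and that then $\epsilon_1p_1\equiv0\pmod3$ is impossible for $p_1\in\{1,2\}$---are all correct. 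Compared with the paper, your proof is self-contained (no reliance on a figure), it explains \emph{why} $n\ge15$ is exactly the right threshold (it is what turns the congruence modulo~$3n$ into an equality, since $44<45\le3n$), and it isolates the arithmetic properties of the offset set $\{0,7,20\}$ that make the construction work, which in turn suggests how one would search for other offset triples giving higher girth.
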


\begin{proof}
Let $n\ge15$ and let $G$ be the graph obtained from a $3n$--cycle $x_0x_1,\ldots,x_{3n-1}x_0$ and $n$ isolated vertices $z_0,z_1,\ldots,z_{n-1}$ according to  the above construction, with the choice of $X_i=\{x_{3i},x_{3i+7},x_{3i+20}\}$ for all $i=0,1,\ldots,n-1$.
To show that $G$ contains no cycles shorter than $9$, by symmetries of the construction of $G$, we may only verify that none of the vertices $x_0$, $x_1$, $x_{3n-1}$, and $z_0$ is contained in a short cycle. Although cumbersome, this can be done by listing all vertices at distance at most $4$ from each of these vertices.
In Figure~\ref{fig:gir9}, we present the subgraph of $G$ induced on the set of vertices at distance at most $4$ from~$x_0$. This illustration affirms that there is no cycle in $G$ of length $8$ or less through the vertex~$x_0$. Similar illustrations for $x_1$, $x_{3n-1}$, and $z_0$ show the same conclusion for these vertices. We omit these, but note that these three subgraphs of $G$ have significant overlap with the one illustrated in Figure~\ref{fig:gir9}.
\end{proof}

\begin{figure}[ht]
\begin{center}
\includegraphics{tsfig-100}
\end{center}
\caption{The subgraph induced by vertices at distance at most $4$ from $x_0$, in the construction of Theorem~\ref{thm:girth9}. Indices of the $x_i$ are reduced modulo $3n$ and indices of the $z_i$ are reduced modulo~$n$. For better visibility, the edges $x_4x_5$, $x_4z_{-1}$, $x_{-4}x_{-5}$, $x_{-5}z_{-4}$, $z_2x_{13}$, and $z_7x_{-22}$ are omitted in this drawing.}
\label{fig:gir9}
\end{figure}

\begin{rem}
It can be verified similarly that the above construction with $n\ge22$ and $X_i=\big\{x_{3i},x_{3i+8},x_{3i+22}\big\}$ for all $i$, gives cubic totally silver cubic graphs of girth~$10$. An alternate way of obtaining (bipartite) totally silver cubic graphs of girth $10$ is discussed in the remainder of this section.
\end{rem}

\begin{rem}
The construction of Theorem~\ref{thm:girth9}, where the neighborhoods of the vertices $z_i$ are congruent modulo~$3$, does not give graphs of girth higher than~$10$. This is since if $z_0$ is adjacent to $x_0$ and $x_p$, then $z_1$ is adjacent to $x_3$ and $x_{p+3}$, thus creating a $10$--cycle $z_0x_0x_1x_2x_3z_1x_{p+3}x_{p+2}x_{p+1}x_pz_0$.
\end{rem}

Given any graph $G$, the {\em bipartite double cover of $G$} is the
tensor product $H=G\times K_2$. Namely, $H$ has vertex set $V(H)=V(G)\times\{1,2\}$,
and $(u,i),(v,j)\in V(H)$ are adjacent in $H$ if and only if
$u v\in E(G)$ and $i\not=j$.
For example $K_{n,n}$ is the bipartite double cover
of~$K_n$. If $c$ is a totally silver coloring of a graph $G$, then the map
$c'$ defined by $c'(u,i)=c(u)$ is a totally silver coloring of~$G\times K_2$.
It is easy to see that every $m$--cycle in $G$ is mapped to a $2m$--cycle in
$G\times K_2$ if $m$ is odd, and to two $m$--cycles in $G\times K_2$ if $m$ is even.
Therefore, the girth of $G\times K_2$ is grater than or equal to the girth of~$G$.
We thus obtain (bipartite) cubic totally silver graphs of girth~$10$ by taking the bipartite double cover of the graphs of Theorem~\ref{thm:girth9}.

We conclude this paper by the following problem.

\begin{pr}
Do there exist cubic totally silver graphs of arbitrary high girth?
\end{pr}

\def\soft#1{\leavevmode\setbox0=\hbox{h}\dimen7=\ht0\advance \dimen7
  by-1ex\relax\if t#1\relax\rlap{\raise.6\dimen7
  \hbox{\kern.3ex\char'47}}#1\relax\else\if T#1\relax
  \rlap{\raise.5\dimen7\hbox{\kern1.3ex\char'47}}#1\relax \else\if
  d#1\relax\rlap{\raise.5\dimen7\hbox{\kern.9ex \char'47}}#1\relax\else\if
  D#1\relax\rlap{\raise.5\dimen7 \hbox{\kern1.4ex\char'47}}#1\relax\else\if
  l#1\relax \rlap{\raise.5\dimen7\hbox{\kern.4ex\char'47}}#1\relax \else\if
  L#1\relax\rlap{\raise.5\dimen7\hbox{\kern.7ex
  \char'47}}#1\relax\else\message{accent \string\soft \space #1 not
  defined!}#1\relax\fi\fi\fi\fi\fi\fi}


\begin{thebibliography}{1}

\bibitem{perfectcol}
E.~M. Bakker, J.~{}van Leeuwen, and R.~B. Tan.
\newblock Perfect colorings.
\newblock Technical Report RUU-CS-90-35, Department of Computer Science,
  Utrecht University, 1990.

\bibitem{cranston}
D.~W. Cranston and S.-J. Kim.
\newblock List-coloring the square of a subcubic graph.
\newblock {\em Journal of Graph Theory}, 57:65--87, 2008.

\bibitem{EbrJahMah}
J.~Ebrahimi, N.~Jahanbakht, and E.~S. Mahmoodian.
\newblock Vertex domination of generalized petersen graphs.
\newblock {\em Discrete Mathematics}, 309(13):4355--4361, 2009.

\bibitem{silvercubes}
M.~Ghebleh, L.~A. Goddyn, E.~S. Mahmoodian, and M.~Verdian-Rizi.
\newblock {Silver cubes.}
\newblock {\em Graphs Comb.}, 24(5):429--442, 2008.

\bibitem{strongcol}
G.~Kant and J.~{}van Leeuwen.
\newblock Strong colorings of graphs.
\newblock Technical Report RUU-CS-90-16, Department of Computer Science,
  Utrecht University, 1990.

\bibitem{IMO97}
M.~Mahdian and E.~S. Mahmoodian.
\newblock The roots of an {IMO}97 problem.
\newblock {\em Bull. Inst. Combin. Appl.}, 28:48--54, 2000.

\end{thebibliography}
\end{document}